\documentclass[preprint,12pt]{elsarticle}

\usepackage{amscd}
\usepackage{amsmath}
\usepackage{amsthm}
\usepackage{amsfonts}
\usepackage{graphicx}
\usepackage{amssymb}
\usepackage{color}
\usepackage{amsbsy}
\usepackage{graphicx}
\usepackage{amssymb,color,amsbsy}

\usepackage{float}
\usepackage{comment}

\usepackage[ruled]{algorithm2e}

\usepackage[matrix,arrow]{xy}
\usepackage{mathabx}   

\DeclareMathAlphabet{\mathpzc}{OT1}{pzc}{m}{it}

\usepackage{pgfpages}
\usepackage{tikz}
\usetikzlibrary{shapes.geometric}
\usetikzlibrary{decorations.pathmorphing}
\usetikzlibrary{arrows}
\usetikzlibrary{backgrounds}
\usetikzlibrary{positioning}
\usetikzlibrary{fit}
\usepackage{caption}
\usepackage{amsthm}

\usepackage{amsmath}

\usepackage[pagebackref=true, bookmarksopen=true,colorlinks=true, linkcolor=red,citecolor=blue]{hyperref}

\usepackage[capitalise]{cleveref}  

\global\long\def\s{\subset}%

\global\long\def\P{\prime}%

\global\long\def\ñ{\sim}%

\usepackage{tikz-cd}

\usepackage{tkz-graph}

\usepackage{multicol}

\usepackage{subcaption}

\newtheorem{theorem}{Theorem}[section]

\newtheorem{corollary}[theorem]{Corollary}

\newtheorem{lemma}[theorem]{Lemma}

\newtheorem{problem}[theorem]{Problem}

\newtheorem{observation}[theorem]{Observation}

\newcommand{\Sachs}[1]{\operatorname{Sachs}(#1)}

\usepackage{comment}
\begin{document}

\begin{abstract}
	A graph is said to be a Sterboul--Deming graph if $KE(G)=\emptyset$, that is, if every vertex of $G$ belongs to a posy or a flower (structures introduced by Sterboul, Deming, and Edmonds). These graphs can be regarded as the structural counterparts of K\"onig--Egerv\'ary graphs. In this paper, we present several characterizations of Sterboul--Deming graphs. We first study the case of graphs with a perfect matching and with a unique perfect matching, providing a constructive algorithm to obtain the decomposition $(SD(G), KE(G))$. Then, we extend the analysis to the general case through the Gallai--Edmonds decomposition. In addition, we show that the class of Sterboul--Deming graphs is remarkably broad: it contains all graphs having a $\{C_n : n \textnormal{ odd}\}$-factor, providing a simple structural criterion for identifying such graphs. These results establish new connections between classical decomposition theorems and the internal structure of non--K\"onig--Egerv\'ary graphs.
\end{abstract}

\begin{keyword}
	König-Egerváry graphs,	Sterboul–Deming, posy, flower, matching, decomposition
	\MSC 15A09, 05C38
\end{keyword}

\begin{frontmatter}
	
	\title{Sterboul–Deming Graphs: Characterizations}


	\author[pan,daj]{Kevin Pereyra}
	\ead{kdpereyra@unsl.edu.ar}

	\address[pan]{Universidad Nacional de San Luis, Argentina.}
	\address[daj]{IMASL-CONICET, Argentina.}
	
	\date{Received: date / Accepted: date}

\end{frontmatter}
%
%
%

\section{Introduction}

Let $\alpha(G)$ denote the cardinality of a maximum independent set,
and let $\mu(G)$ be the size of a maximum matching in $G=(V,E)$.
It is known that $\alpha(G)+\mu(G)$ equals the order of $G$,
in which case $G$ is a König--Egerváry graph 
\cite{deming1979independence,gavril1977testing,stersoul1979characterization}.
Various properties of König--Egerváry graphs were presented in 
\cite{bourjolly2009node,jarden2017two,levit2006alpha,levit2012critical}.
It is known that every bipartite graph is a König--Egerváry graph 
\cite{egervary1931combinatorial}.

The term subgraph in here is understood as a subgraph defined by a
graph and a given matching in that graph. In \cite{edmonds1965paths}, Edmonds introduced the following concepts
relative to a matching $M$ of a graph $G$ and its subgraphs. An
$M$-blossom of $G$ is an odd cycle of length $2k+1$ with $k$ edges
in $M$. The vertex not saturated by $M$ in the cycle is called
the \textit{base} of the blossom. An $M$-stem is an $M$-alternating
path of even length (possibly zero) connecting the base of the blossom
with a vertex not saturated by $M$ in $G$. The base is the only common vertex
between the blossom and the stem. An $M$-Tflower is a blossom joined
with a stem. The vertex not saturated by $M$ in the stem is called
the \textit{root} of the flower. In \cite{jaume2025confpart2}, this concept is generalized by introducing the notion of an \(M\)-Jflower, obtained from an \(M\)-Tflower by allowing its stem to be an \(mm\)-\(M\)walk instead of an \(mm\)-\(M\)path.

In \cite{stersoul1979characterization}, Sterboul introduced the concept
of a \textit{posy} for the first time.

\begin{itemize}
	\item Un $M$-Tposy consists of two $M$-blossoms joined by an $mm$-$M$path.
	The endpoints of the path are the bases of the two blossoms. There
	are no internal vertices of the path in the blossoms. The intersection
	between the blossoms can only be an $mm$-$M$path (in other words,
	a subdivision of a barbell or of a $K_{4}$ with a perfect matching
	inherited from $G$).
	\item Un $M$-Sposy consists of two (not necessarily disjoint) $M$-blossoms
	joined by an $mm$-$M$path. The endpoints of the path are the bases
	of the two blossoms. There are no internal vertices of the path in
	the blossoms. 
	\item Un $M$-Jposy consists of two (not necessarily disjoint) $M$-blossoms
	joined by an $mm$-$M$walk. The endpoints of the walk are the bases
	of the two blossoms. 
\end{itemize}

Of course, an $M$-Tposy is an $M$-Sposy, and an $M$-Sposy is an $M$-Jposy.

\begin{theorem}\label{safe}
	For a graph $G$, the following properties are equivalent: 
	\begin{itemize}
		\item $G$ is a non-König--Egerváry graph.
		\item For every maximum matching $M$, there exists an $M$-Tflower or an $M$-Sposy
		in $G$.
		\item For some maximum matching $M$, there exists an $M$-Tflower or an $M$-Tposy
		in $G$.
	\end{itemize}
\end{theorem}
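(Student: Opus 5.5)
The natural route is to prove the cycle of implications (i)$\Rightarrow$(ii)$\Rightarrow$(iii)$\Rightarrow$(i). The implication (ii)$\Rightarrow$(iii) needs two small observations. First, maximum matchings always exist. Second, an $M$-Sposy whose two blossoms intersect in something other than an $mm$-$M$path can be trimmed to a Tposy or a Tflower. For the trimming, walk along one blossom from its base until it first meets the other blossom. The overlap, together with one blossom and the connecting path, then yields either a Tposy (a barbell or $K_4$ subdivision) or a Tflower rooted at an exposed vertex. This is Sterboul's original reduction. Alternatively, one can bypass the trimming by proving (i)$\Rightarrow$(iii) directly and (iii)$\Rightarrow$(ii) separately, which is what I will actually organize.

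For (iii)$\Rightarrow$(i), suppose $G$ is König--Egerváry. Then there is an independent set $S$ with $|S|+|M|=|V|$. Each $M$-edge contains at most one vertex of $S$, and the $M$-exposed vertices number $|V|-2|M|$. Counting therefore forces every exposed vertex to lie in $S$ and every $M$-edge to meet $S$ in exactly one vertex. Hence $S$ together with $V\setminus S$ behaves like a bipartition on $M$. Along any $M$-alternating path starting in $S$ at an exposed vertex, or at a base forced into $S$, membership in $S$ alternates: non-matching edges go from $S$ to $V\setminus S$, and matched partners switch sides.

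An odd cycle with $k$ matching edges and base $b$ cannot be alternately coloured this way unless $b\notin S$. In a flower the stem forces the base into $S$, a contradiction. In a Tposy the $mm$-path between the two bases forces one base into $S$, a contradiction. So the existence of a flower or posy for some maximum $M$ excludes the König--Egerváry property, and this direction is in fact valid for every maximum matching.

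For (i)$\Rightarrow$(ii), fix any maximum matching $M$ and assume there is no $M$-Tflower and no $M$-Sposy. I would build an independent set of size $|V|-|M|$. Let $X$ be the set of vertices reachable from exposed vertices by even alternating paths. With no flowers, a standard Edmonds/Gallai argument shows that $X\cup N_M(X)$ induces a bipartite-like structure with $X$ independent. On the remainder $G'$, $M$ is perfect. Pick an $M$-edge $uv$ and try to place $u$ in $S$, propagating along alternating paths. A conflict produces an $M$-alternating odd closed walk, hence a blossom. Two conflicts, one for each choice of endpoint, produce two blossoms joined by an $mm$-walk, which is a Jposy. This is then reduced to an Sposy by shortcutting the walk.

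The main obstacle is exactly this reduction: turning the conflicting alternating walks into genuine blossoms and paths satisfying the Sposy constraints, with the internal path vertices avoiding the blossoms. I would handle it by choosing minimal conflicts and arguing by minimality. Alternatively, I would invoke the Jposy/Jflower equivalence from \cite{jaume2025confpart2} together with Sterboul's and Deming's original theorems \cite{stersoul1979characterization,deming1979independence}, which give precisely (i)$\Leftrightarrow$(ii) and (i)$\Leftrightarrow$(iii).
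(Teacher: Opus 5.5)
\textbf{Main gap: the implication into (iii).} Your colouring argument for (iii)$\Rightarrow$(i) is correct. It never uses how the two blossoms intersect, so it also proves (ii)$\Rightarrow$(i). The gap is getting into (iii).

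Your propagation sketch fixes $M$ and at best produces an $M$-Jposy or $M$-Sposy. Everything therefore hinges on your claim that an $M$-Sposy can be trimmed to an $M$-Tposy or $M$-Tflower. That claim is false for a fixed $M$. No trimming can ever give a flower, because every vertex of a posy is $M$-saturated. It also need not give a Tposy, as the following counterexample shows.

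Take vertices $b_1,b_2,a_1,a_2,a_3,a_4,c_1,c_2$ with the perfect matching $M=\{b_1b_2,a_1a_2,a_3a_4,c_1c_2\}$ and the further edges $b_1a_1,a_2a_3,a_4b_1,b_2a_2,a_3b_2,a_1c_1,c_2a_4$.
\begin{itemize}
\item The blossoms $B_1=b_1a_1a_2a_3a_4b_1$ and $B_2=b_2a_2a_1c_1c_2a_4a_3b_2$, joined by the edge $b_1b_2$, form an $M$-Sposy. The two blossoms share the two disjoint edges $a_1a_2$ and $a_3a_4$.
\item Since $M$ is perfect, there is no $M$-flower.
\item There is exactly one blossom at each of $b_1,b_2,a_1,\dots,a_4$. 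The vertices $c_1,c_2$ each have only one non-matching edge, so they are not bases.
\item The three base pairs joined by a single $M$-edge ($b_1b_2$, $a_1a_2$, $a_3a_4$) have blossoms meeting in two disjoint $M$-edges.
\item Every longer $mm$-path between two bases has an internal vertex on one of the two blossoms.
\end{itemize}
So there is no $M$-Tposy. Your trimming, walking along one blossom until it first meets the other, closes an even $M$-alternating $4$-cycle such as $b_1b_2a_2a_1b_1$, not a Tposy.

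\textbf{The missing idea.} What is missing is the change of matching that the quantifier ``for some maximum matching'' in (iii) permits. Rotate to $M'=M\triangle\{b_1b_2,b_2a_2,a_2a_1,a_1b_1\}$. Then the triangle $a_2a_3b_2$ (base $a_3$) and the $5$-cycle $a_4b_1a_1c_1c_2$ (base $a_4$), joined by $a_3a_4$, form an $M'$-Tposy. A correct proof of (i)$\Rightarrow$(iii) must therefore be allowed to rotate $M$ along the even alternating cycles created by bad overlaps, for example by choosing the maximum matching together with the configuration so as to minimize its size.

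\textbf{A second unresolved step.} The same phenomenon affects your ``choose minimal conflicts'' step for (i)$\Rightarrow$(ii). The first closed sub-walk of a conflicting odd alternating closed walk can be an even alternating cycle rather than a blossom. Extracting a genuine blossom, plus an $mm$-path to its base avoiding it, is the nontrivial content, and you leave it open.

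\textbf{Relation to the paper.} The paper gives no proof of this theorem; it quotes it as the classical Sterboul--Deming characterization. Your fallback to those citations therefore matches the paper's treatment. The self-contained part you offer in its place should not rely on the fixed-$M$ trimming claim.
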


Sterboul \cite{stersoul1979characterization} was the first to characterize
König--Egerváry graphs via forbidden configurations relative to
a maximum matching. Subsequently, Korach, Nguyen,
and Peis \cite{korach2006subgraph} reformulated this characterization
in terms of simpler configurations, unifying the structures of
flowers and posies. Later, Bonomo et al. \cite{bonomo2013forbidden}
obtained a purely structural characterization based on forbidden subgraphs.
More recently, in \cite{jaume2025confpart3,jaume2025confpart2,jaume2025confpart1},
results were obtained that simplify working with flower and posy
structures.

The set of vertices of $G$ lying in a Tflower or Tposy, for any maximum matching,
is denoted by $SD(G)$, and we write $KE(G)=V(G)-SD(G)$.
The sets $SD(G)$ and $KE(G)$ constitute the SD--KE decomposition of the graph. \cref{12ews} provides a very useful characterization of this decomposition.
\begin{equation*}
	\begin{aligned}
		SD(G) = \{\,v \in V(G) :\;& \text{there exists a matching } M \text{ such that } \\
		& v \text{ lies in an } M\text{-Tposy or in an } M\text{-Tflower}\,\},
	\end{aligned}
\end{equation*}
\begin{equation*}
	\begin{aligned}
		SDJ(G) = \{\,v \in V(G) :\;& \text{there exists a matching } M \text{ such that } \\
		& v \text{ lies in an } M\text{-Jposy or in an } M\text{-Jflower}\,\}.
	\end{aligned}
\end{equation*}

\begin{theorem}[\cite{jaume2025confpart1}\label{12ews}]
	For every graph $G$,
	$$SD(G)=JSD(G).$$
\end{theorem}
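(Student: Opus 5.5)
The inclusion $SD(G)\subseteq SDJ(G)$ is immediate: every $M$-Tflower is an $M$-Jflower, since an $mm$-$M$path is in particular an $mm$-$M$walk, and every $M$-Tposy is an $M$-Sposy and hence an $M$-Jposy. The whole content of the statement is therefore the reverse inclusion $SDJ(G)\subseteq SD(G)$. Throughout I take $M$ to be a maximum matching, as in the definition of $SD(G)$ via \cref{safe}. I fix a vertex $v$ lying in an $M$-Jflower or $M$-Jposy $H$. The aim is to produce a (possibly different) maximum matching $M'$ and an $M'$-Tflower or $M'$-Tposy containing $v$. The plan is an induction on the total length of the stem walk (for a Jflower) or the connecting walk (for a Jposy), combined with local surgery.

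\emph{Step 1 (shortening walks).} Suppose the $mm$-$M$walk $W$ in $H$ is not a path. I take the first repeated vertex along $W$; this closes an $M$-alternating closed subwalk. If the subwalk has even length, it can be excised, leaving a shorter $mm$-$M$walk with the same endpoints. If $v$ lies only on the excised part, I would instead use the excised even alternating cycle $C$ and replace $M$ by $M\triangle C$. This is again maximum, and the rerouted structure now passes through $v$. If the closed subwalk has odd length, it contains an odd cycle with all but one vertex matched along it, i.e.\ a new $M$-blossom $B'$. Cutting $W$ at the base of $B'$ then yields either a Jflower (the stem runs from the unsaturated root to $B'$) or a Jposy (joining $B'$ to one of the original blossoms), in each case with strictly shorter walk. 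I would choose which piece to keep so that $v$ remains in the new structure. This is possible because $v$ lies either on the retained prefix, on $B'$, or on the suffix, and the suffix can be paired with the original terminal blossom.

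\emph{Step 2 (from Sposy to Tposy).} Once the connecting object is a path, a Jflower is already a Tflower. A Jposy, however, becomes at best an Sposy whose two blossoms may overlap badly, or whose path may meet the blossoms internally. Here I would invoke \cref{safe}-style reasoning: inside the subgraph $H$ with matching $M|_H$, an Sposy certifies that $H$ is non-König--Egerváry. I would then redo the local argument of Sterboul and of Korach--Nguyen--Peis to extract a Tposy or Tflower inside $H$. When the blossoms intersect, I would take the portion of one blossom from its base to the first vertex where it meets the other blossom. This portion splits the union into two odd cycles sharing an $mm$-$M$path, which is the $K_4$/barbell subdivision of a Tposy. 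I must again check that $v$ can be kept; otherwise I would use the symmetric choice of splitting path.

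\emph{Main obstacle.} The hardest point is guaranteeing that the particular vertex $v$ survives every reduction. Generic shortening preserves \emph{some} obstruction, but not necessarily one through $v$. I expect to handle this by the case analysis above, using the freedom to switch to $M\triangle C$ along even alternating cycles, which preserves maximality. The most delicate subcase is when $v$ lies on an odd closed subwalk that revisits a blossom vertex. There I would try first to exhibit $v$ on a blossom obtained by rerouting, and only fall back on changing the matching if that fails.
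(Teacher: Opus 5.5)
\paragraph{Verdict: there is a genuine gap.} The paper gives no proof of this statement; it is imported from \cite{jaume2025confpart1}. Your argument therefore has to stand on its own, and as written it does not. The only inclusion with content, $SDJ(G)\subseteq SD(G)$, is exactly the claim that the \emph{prescribed} vertex $v$ can be placed on a Tposy or Tflower for some maximum matching. That is the step you leave open (``I expect to handle this by the case analysis above''). Appealing to \cref{safe}, or to the Sterboul/Korach--Nguyen--Peis argument inside $H$, only produces an obstruction somewhere in $H$, not one through $v$.

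Step~1 has a hole of the same kind. A Jposy walk may run through blossom vertices, so the even cycle $C$ it closes may share an $mm$-segment with a blossom. In $M\triangle C$ both ends of that segment then have two non-matching blossom edges. The blossom is no longer an $(M\triangle C)$-blossom, and there is no ``rerouted structure''.

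Step~2 fails concretely. Let $Q$ be the arc of $B_1$ from $b_1$ to the first vertex $x$ of $B_2$. Then $Q$ enters $x$ through a non-matching edge, because the matching edge of $x$ lies on $B_2$. Hence $Q\cup P\cup B_2$ is a theta graph whose two odd cycles share the arc of $B_2$ leaving $x$ by a non-matching edge. That shared arc is an $nn$-path containing both bases $x$ and $b_2$, so this is not a Tposy.

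For an explicit instance, let $M=\{b_1b_2,a_1a_2,a_3a_4,a_5a_6,c_1c_2\}$, $B_1=b_1a_1a_2a_3a_4a_5a_6b_1$, $B_2=b_2a_2a_1c_1c_2a_6a_5b_2$, and $G=B_1\cup B_2+b_1b_2$. This is an $M$-Sposy whose connecting walk is already a path, so Step~1 is vacuous; take $v=a_3$.
\begin{itemize}
\item Both first-meeting arcs, $b_1a_1$ and $b_1a_6$, miss $a_3$.
\item With $Q=b_1a_1$, the graph $Q\cup P\cup B_2$ is even K\"onig--Egerv\'ary, since $\{a_2,b_1,c_2,a_5\}$ is independent of size $4$ in this $8$-vertex graph.
\item In fact no $M$-Tposy contains $a_3$ at all. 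It cannot lie on a connecting path. The only $M$-blossoms through it are $B_1$ and the $7$-cycles $a_2a_3a_4a_5a_6b_1b_2$ and $a_5a_4a_3a_2a_1b_1b_2$. Each of these admits only a partner blossom meeting it in two disjoint matching edges.
\item A Tposy through $a_3$ appears after switching to $M'=M\triangle(a_1a_2a_3a_4a_5a_6c_2c_1)$. There the $5$-cycles $b_2a_2a_3a_4a_5$ and $b_1a_1c_1c_2a_6$, joined by $b_1b_2$, form a barbell.
\end{itemize}
The switching cycle here has nothing to do with repetitions of the connecting walk, which is the single edge $b_1b_2$. It is assembled from an arc of $B_1$ and an arc of $B_2$.

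What is missing is a rule that produces such an even alternating cycle (that is, the right new maximum matching), together with a proof that it always yields a T-structure through $v$. Alternatively, you need a global certificate for $v\notin KE(G)$ that avoids tracking $v$ through surgery.
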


\noindent That is, the set $SD(G)$ does not depend on the particular definition of posy or flower we choose.

A graph $G$ is called a Sterboul–Deming graph (SD graph) if $KE(G)=\emptyset$. Essentially, it can be regarded as the structural counterpart of a König–Egerváry graph. From \cref{12ews}, we immediately obtain the following characterization:

\begin{theorem}\label{asd1s}
	A graph is a Sterboul–Deming graph if and only if every vertex belongs to an $M$-Jposy or an $M$-Jflower, for some maximum matching $M$.
\end{theorem}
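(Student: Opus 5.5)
This is essentially an unwinding of definitions combined with \cref{12ews}, so the plan is a short chain of equalities. By definition, $G$ is a Sterboul--Deming graph exactly when $KE(G)=\emptyset$. Since $KE(G)=V(G)-SD(G)$ and $SD(G)\subseteq V(G)$, this is equivalent to $SD(G)=V(G)$. By \cref{12ews} we have $SD(G)=SDJ(G)$, so the condition becomes $SDJ(G)=V(G)$.

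The next step is to read off the definition of $SDJ(G)$. The condition $SDJ(G)=V(G)$ says that for every vertex $v$ there is a matching $M$ such that $v$ lies in an $M$-Jposy or an $M$-Jflower. It remains to match this with the statement of \cref{asd1s}, which requires $M$ to be a \emph{maximum} matching.

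\textbf{Main obstacle: the quantifier over matchings.} The displayed definitions of $SD(G)$ and $SDJ(G)$ say ``there exists a matching $M$'', while the prose before them says ``for any maximum matching''. I would read the definitions as ranging over maximum matchings, consistent with that prose and with \cref{safe}; with this reading the equivalence is immediate. If one insisted on arbitrary matchings, the backward direction would fail. For example, in $K_3$ with $M=\emptyset$, the empty stem at any vertex together with the triangle forms a blossom with $0$ matched edges only if the length is $1$, so degenerate cases arise. More generally, flowers and posies relative to non-maximum matchings need not certify anything about $KE$. So I would state explicitly that the definitions of $SD$ and $SDJ$ quantify over maximum matchings, as in the cited source \cite{jaume2025confpart1}.

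With that convention fixed, both directions follow. If every vertex lies in an $M$-Jposy or $M$-Jflower for some maximum matching $M$, then every vertex is in $SDJ(G)=SD(G)$, so $KE(G)=\emptyset$. Conversely, if $KE(G)=\emptyset$, every vertex lies in $SD(G)=SDJ(G)$, which gives the required maximum matching and J-structure for each vertex.
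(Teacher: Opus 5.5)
Your proof is correct and matches the paper's argument: the paper obtains this statement immediately from $KE(G)=V(G)-SD(G)$ and \cref{12ews}, implicitly reading the matchings in the definitions of $SD(G)$ and $SDJ(G)$ as maximum ones, just as you do. One slip in your aside: under an ``arbitrary matching'' reading, the implication that would no longer follow is the one from $KE(G)=\emptyset$ to a \emph{maximum} matching $M$; the other implication is trivial, since a maximum matching is a matching. Also, your $K_3$ example with $M=\emptyset$ illustrates nothing, because $K_3$ has no $\emptyset$-blossom at all.
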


\cref{asd1s} will be a central tool throughout this work, as it allows us to considerably simplify the study of Sterboul–Deming graphs. When writing proofs, it is much more tedious to work with Tposies or Tflowers because of the intersection constraints they impose; in contrast, the use of Jposies and Jflowers provides much greater flexibility.

The main goal of this paper is to characterize Sterboul–Deming graphs. In \cref{preliminaries} we present a brief review of the notation used throughout the paper. In \cref{sss1} we provide several characterizations for Sterboul–Deming graphs with a perfect matching and with a unique perfect matching, together with an algorithm to obtain the SD–KE decomposition in graphs with a unique perfect matching. In \cref{sss2} we extend the study to the general case through the Gallai–Edmonds decomposition. 
Beyond these characterizations, we show that the class of Sterboul–Deming graphs is surprisingly broad: it includes all graphs having a $\{ C_n : n\textnormal{ odd}\} $-factor and, in particular, all odd 2-factored graphs, thus providing a simple and effective criterion to verify whether a graph belongs to this class. We conclude the work by leaving some open problems.

\section{Preliminaries}\label{preliminaries}

All graphs considered in this paper are finite, undirected, and simple.
For any undefined terminology or notation, we refer the reader to
Lovász and Plummer \cite{LP} or Diestel \cite{Distel}.

Let \( G = (V, E) \) be a simple graph, where \( V = V(G) \) is the finite set of vertices and \( E = E(G) \) is the set of edges, with \( E \subseteq \{\{u, v\} : u, v \in V, u \neq v\} \). We denote the edge \( e=\{u, v\} \) as \( uv \). A subgraph of \( G \) is a graph \( H \) such that \( V(H) \subseteq V(G) \) and \( E(H) \subseteq E(G) \). A subgraph \( H \) of \( G \) is called a \textit{spanning} subgraph if \( V(H) = V(G) \). 

Let \( e \in E(G) \) and \( v \in V(G) \). We define \( G - e := (V, E \setminus \{e\}) \) and \( G - v := (V \setminus \{v\}, \{uw \in E : u,w \neq v\}) \). If \( X \subseteq V(G) \), the \textit{induced} subgraph of \( G \) by \( X \) is the subgraph \( G[X]=(X,F) \), where \( F:=\{uv \in E(G) : u, v \in X\} \).

Given a vertex set $S \subseteq V(G)$, we denote by $\partial(S)$
the set of edges with one endpoint in $S$ and the other in $V(G)\setminus S$.

A \textit{matching} \(M\) in a graph \(G\) is a set of pairwise non-adjacent edges. The \textit{matching number} of \(G\), denoted by  \(\mu(G)\), is the maximum cardinality of any matching in \(G\). Matchings induce an involution on the vertex set of the graph: \(M:V(G)\rightarrow V(G)\), where \(M(v)=u\) if \(uv \in M\), and \(M(v)=v\) otherwise. If \(S, U \subseteq V(G)\) with \(S \cap U = \emptyset\), we say that \(M\) is a matching from \(S\) to \(U\) if \(M(S) \subseteq U\). A matching $M$ is \emph{perfect} if $M(v)\neq v$ for every vertex
of the graph. A matching is \emph{near-perfect} if \( \left|{v \in V(G) : M(v) = v}\right| = 1 \).  A graph is a factor-critical graph if $G-v$ has perfect matching
for every vertex $v\in V(G)$. 

A vertex set \( S \subseteq V \) is \textit{independent} if, for every pair of vertices \( u, v \in S \), we have \( uv \notin E \). The number of vertices in a maximum independent set is denoted by \( \alpha(G) \). By definition, the empty set is independent. Let $\Omega^{*}(G)=\{S\s V(G):\textnormal{ S is an independendent set}\}$
and $\Omega(G)=\{S\in\Omega^{*}(G):\left|S\right|=\alpha(G)\}$.

Let $M$ be a matching in $G$. A path (or walk) is called \textit{alternating} with respect to $M$ if, for each pair of consecutive edges in the path, exactly one of them belongs to $M$. If the matching is clear from context, we simply say that the path is alternating. Given an alternating path (or walk), we say that $P$ is: \textit{\(mm\)-\(M\)path} if it starts and ends with edges in $M$, \textit{\(nn\)-\(M\)path} if it starts and ends with edges not in $M$, \textit{\(mn\)-\(M\)path} if it starts with an edge in $M$ and ends with one not in $M$, and \textit{\(nm\)-\(M\)path} if it starts with an unmatched edge and ends with a matched edge. 

$i(G)$ denotes the number of isolated vertex in the graph $G$. 

\section{Sterboul-Deming graphs with perfect matching}\label{sss1}

In this section we study Sterboul--Deming graphs that admit a perfect matching. We present characterizations both for the general case of graphs with a perfect matching and for the particular case where such matching is unique. We also provide an algorithm to obtain the SD--KE decomposition in graphs with a unique perfect matching.

\begin{theorem}\label{Thm1}
	A graph with a unique perfect matching is a Sterboul--Deming
	graph if and only if it has no leaves.
\end{theorem}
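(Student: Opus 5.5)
Since $G$ has a perfect matching $M$, and this is its unique maximum matching, no vertex is $M$-unsaturated. So $G$ contains no $M$-flower of any kind, and by \cref{asd1s} $G$ is Sterboul--Deming if and only if every vertex lies in some $M$-Jposy for this particular $M$. I will use repeatedly the standard fact that a graph with a unique perfect matching $M$ has no $M$-alternating cycle: an even cycle alternating with respect to $M$ would give a second perfect matching $M\triangle E(C)$.

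\textbf{Necessity.} Suppose $v$ is a leaf with neighbour $u$. Since $M$ is perfect, $uv\in M$, so $v$ has no incident non-$M$ edge. Every vertex of an $M$-blossom other than the base meets one $M$-edge and one non-$M$ edge of the cycle. The base of a blossom meets two non-$M$ edges of the cycle. An internal occurrence of a vertex on an alternating walk uses one $M$-edge and one non-$M$ edge. Each endpoint of the connecting walk of a Jposy is a base. Hence $v$ lies in no $M$-Jposy, and $G$ is not Sterboul--Deming.

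\textbf{Sufficiency.} Assume $G$ has no leaves. Since $M$ is perfect there are no isolated vertices either, so every vertex $w$ has a neighbour other than $M(w)$, that is, an incident non-$M$ edge. Fix a vertex $v$.

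I will grow a greedy alternating walk $W_1$ from $v$ starting with the $M$-edge: $v, M(v), y_1, M(y_1), y_2,\dots$. At each step we leave along the $M$-edge or along a non-$M$ edge as alternation requires, and the non-$M$ edge is always available by the no-leaf assumption. By finiteness some vertex eventually repeats; stop at the first repetition, at vertex $z_1$.

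\begin{itemize}
\item \emph{$z_1$ is entered via a non-$M$ edge.} If $z_1$ were entered via an $M$-edge, the previous vertex would be $M(z_1)$. Then $M(z_1)$ had already appeared, adjacent to $z_1$ in the walk, contradicting that this is the first repetition.
\item \emph{The closed part is odd.} If $z_1$ was originally left via its $M$-edge, the closed part is an $M$-alternating even cycle, which is impossible. So $z_1$ was left and re-entered via non-$M$ edges, and the closed part is an odd $M$-blossom $B_1$ with base $z_1$.
\item \emph{The stem is mm.} The segment of $W_1$ from $v$ to $z_1$ starts with the $M$-edge at $v$ and ends with the $M$-edge entering $z_1$. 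So it is an $mm$-$M$walk.
\end{itemize}

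Now I repeat the construction from $v$, this time starting with a non-$M$ edge at $v$. This yields a blossom $B_2$ with base $z_2$, reached by a walk $W_2$ that starts with a non-$M$ edge at $v$ and ends with an $M$-edge at $z_2$. The same first-repetition argument applies. There is one special case: the first repeated vertex may be $v$ itself. Then $v$ was left and re-entered via non-$M$ edges, so $v=z_2$ is a base and $W_2$ is trivial.

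Finally I concatenate the reverse of the $v$--$z_1$ segment of $W_1$ with $W_2$. The result runs from $z_1$ to $z_2$, starts and ends with $M$-edges, and alternates at $v$ (arriving by $M$, leaving by non-$M$). So it is an $mm$-$M$walk joining the bases of $B_1$ and $B_2$, and together with the two blossoms it forms an $M$-Jposy containing $v$. The definition of a Jposy places no restriction on how the blossoms meet each other or the walk, which is exactly why \cref{asd1s} makes this step easy.

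The main obstacle is the careful bookkeeping at the first repetition: showing that the closing edge is non-$M$, and that uniqueness of $M$ forces the cycle to be odd with the repeated vertex as base. Also needed is the degenerate case where the repetition occurs at $v$ itself. Once this is settled, the parity of the concatenated walk is routine.
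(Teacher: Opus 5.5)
Your proof is correct and is essentially the paper's argument: the paper takes a maximum-length $mm$-path from $v$ and one from $M(v)$, which amounts to an alternating exploration from $v$ starting with an $M$-edge and one starting with a non-$M$ edge. It uses the absence of leaves to get a back-edge at the endpoint and the absence of even $M$-alternating cycles to force that back-edge to close an $M$-blossom, then glues the two stems at $v$ into an $M$-Jposy, exactly as your first-repetition walks do. One step needs its own justification: when $W_2$ first returns to $v$, $M(v)$ is not next to $v$ in that walk, so your ``partner already appeared'' argument does not apply, but re-entering $v$ along $vM(v)$ would close an even $M$-alternating cycle, which uniqueness of $M$ forbids.
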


\begin{proof}
	If $G$ is a Sterboul--Deming graph with a unique perfect matching,
	then trivially it has no leaves, since $\delta(G)\ge2$. Conversely,
	suppose that $G$ is a graph with a unique perfect matching $M$
	and without leaves. Let $v\in V(G)$ and let $P=v_{1},v_{2},\dots,v_{k}$
	be an $mm$-path starting at $v=v_{1}$ of maximum length; then
	$k$ is even. Since the graph has no leaves, there exists $1\le i\le k-1$
	such that $v_{i}\in N(v_{k})$. As there are no even alternating
	cycles, $i$ must be even, that is, an odd cycle is formed. Repeating
	the same reasoning for $M(v)$ we obtain an $mm$-path $Q$ starting
	at $M(x)$ and ending at the base of an $M$-blossom $B$. Finally,
	the paths $P$ and $Q$ together with the blossom $B$ and the edge
	$v_{k}v_{i}$ form an $M$-Jposy containing $v$, that is, $v\in SD(G)$. 
\end{proof}

If $G$ is a graph with a unique perfect matching $M$ having a leaf
$v$, then $v,M(v)\in KE(G)$. Moreover, if $G^{\P}:=\left(G-v\right)-M(v)$,
then $SD(G^{\P})=SD(G)$, since $G^{\P}$ also has a unique perfect
matching. Therefore, we can continue this process and, from \cref{Thm1},
we obtain the following algorithm to compute an SD--KE decomposition
in a graph with a unique perfect matching. 

\begin{algorithm}[H]
	\DontPrintSemicolon
	\KwIn{Graph $G=(V,E)$ and the unique perfect matching $M$ of $G$}
	\KwOut{Sets $KE$ and $SD$, the SD--KE partition of $G$}
	\;
	
	Initialization: $KE \gets \emptyset$ \tcp*{Initial KE set}
	\;
	
	\While{$G$ has a vertex $v$ of degree $1$ in $G$}{
		$KE \gets KE \cup \{v, M(v)\}$ \tcp*{Add vertex and its partner}\;
		$G \gets \left(G-v\right)-M(v)$ \tcp*{Delete the vertices $v$ and $M(v)$ from $G$}
	}
	$SD \gets V-KE$ \tcp*{Define SD set}
	\;
	
	\Return $(SD, KE)$ \tcp*{Return the SD--KE partition of $G$}
	\caption{SD--KE decomposition algorithm for graphs with a unique perfect matching}
\end{algorithm}

\begin{figure}[h!]
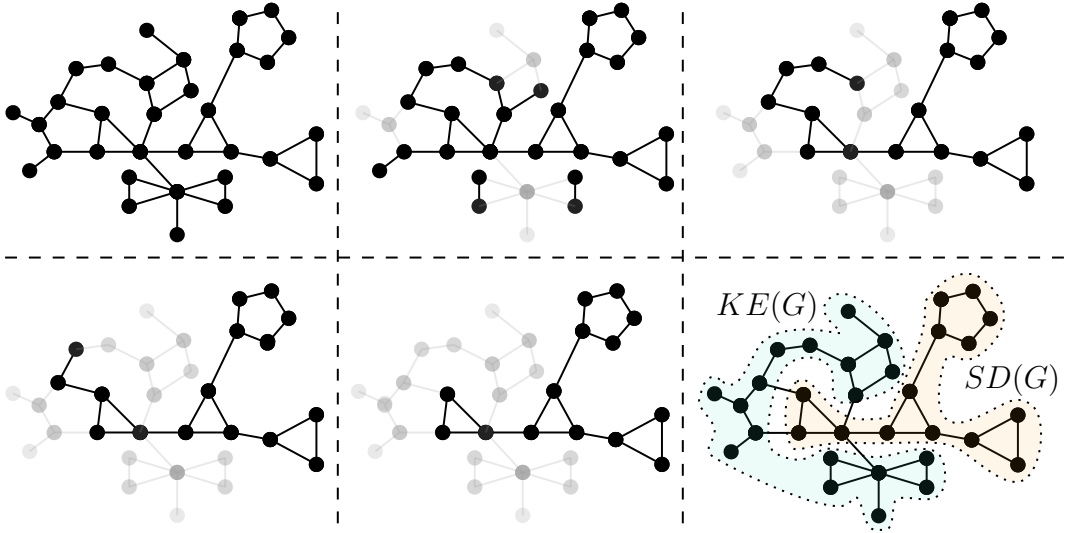

	
	\begin{center}

		\tikzset{every picture/.style={line width=0.75pt}} 
		


	\end{center}
	\caption{Illustration of Algorithm~1
	}
	\label{Fig1ura1}
	
\end{figure}

The previous algorithm allows us to obtain the SD--KE decomposition in graphs with a unique perfect matching, without the need to explicitly find such matching, since at each iteration only the leaf and its neighbor are removed (see \cref{Fig1ura1}). \cref{Thm1} can also be stated in the following equivalent form, which is useful for verifying whether a graph is $SD$. The advantage of this formulation lies in checking the uniqueness condition, which can be verified by inspecting any perfect matching of the graph.

\begin{theorem}
	Let $G$ be a graph with a perfect matching. Then the following statements are equivalent:
	\begin{enumerate}
		\item $G$ is a Sterboul--Deming graph with a unique perfect matching.
		\item $G$ has no $M$-alternating cycle for some perfect matching $M$ and has no leaves.
		\item $G$ has no $M$-alternating cycle for every perfect matching $M$ and has no leaves.
	\end{enumerate}
\end{theorem}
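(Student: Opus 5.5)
The plan is to reduce everything to \cref{Thm1} together with the classical fact (Berge) that a perfect matching $M$ of $G$ is the unique perfect matching if and only if $G$ has no $M$-alternating cycle. Recall why this fact holds. If $M'$ is another perfect matching, then $M\triangle M'$ is a nonempty disjoint union of even cycles alternating between $M$ and $M'$, hence $M$-alternating. Conversely, if $C$ is an $M$-alternating (necessarily even) cycle, then $M\triangle E(C)$ is a perfect matching different from $M$.

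\textbf{(1)$\Rightarrow$(3).} Suppose $G$ is Sterboul--Deming with a unique perfect matching $M$. Then $M$ is the only perfect matching, so the Berge fact gives that there is no $M$-alternating cycle for every perfect matching $M$. Moreover, \cref{Thm1} (or directly $\delta(G)\ge 2$, since every vertex lies in a Jposy or Jflower) shows that $G$ has no leaves.

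\textbf{(3)$\Rightarrow$(2).} This is trivial, since $G$ has a perfect matching by hypothesis, so the universal statement applies to at least one $M$.

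\textbf{(2)$\Rightarrow$(1).} Let $M$ be a perfect matching with no $M$-alternating cycle. By the Berge fact, $M$ is the unique perfect matching of $G$. Since $G$ also has no leaves, \cref{Thm1} gives that $G$ is Sterboul--Deming, and of course it has a unique perfect matching.

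There is essentially no obstacle. The only point needing care is to justify the Berge-type equivalence cleanly: the symmetric difference of two distinct perfect matchings consists only of even alternating cycles, with no paths, because every vertex is saturated by both matchings. I will include this short argument rather than merely cite it.
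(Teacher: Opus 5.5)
Your proof is correct and is essentially the paper's argument: both combine \cref{Thm1} with the symmetric-difference characterization of a unique perfect matching. You only order the implications as (1)$\Rightarrow$(3)$\Rightarrow$(2)$\Rightarrow$(1), and you make explicit the direction the paper uses tacitly in its (1)$\Rightarrow$(2), namely that an $M$-alternating cycle $C$ gives a second perfect matching $M\triangle E(C)$.

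One small caveat: your parenthetical ``directly $\delta(G)\ge 2$'' holds here only because a graph with a perfect matching has no flowers (in general the root of a Jflower can be a leaf), so citing \cref{Thm1} is the cleaner justification.
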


\begin{proof}
	$1\Rightarrow2)$ By \cref{Thm1}, $G$ has no leaves and the unique maximum matching does not leave alternating cycles, since it is unique. 
	
	$2\Rightarrow3)$ Let $M$ and $M^{\P}$ be two perfect matchings. Suppose that $G$ has no $M$-alternating cycles; then $M\triangle M^{\P}=\emptyset$, and consequently $M=M^{\P}$. 
	
	$3\Rightarrow1)$ A similar argument shows that $G$ has a unique perfect matching in this case. Moreover, since $G$ has no leaves, by \cref{Thm1} it follows that $G$ is a Sterboul--Deming graph.
\end{proof}

In the remainder of this section, we provide characterizations of Sterboul--Deming graphs with a perfect matching (not necessarily unique). The key step here is to rely on a result from \cite{agusvalenota}, which establishes the equivalence between the Larson independence decomposition \cite{larson2011critical} and the SD--KE decomposition in graphs with a perfect matching.

An independent set $I$ is a \emph{critical independent set} if $\left|I\right|-\left|N(I)\right|\ge\left|J\right|-\left|N(J)\right|$ for any independent set $J$. A critical independent set is \emph{maximum} if it has maximum cardinality. In \cite{larson2011critical}, Larson introduced the following decomposition theorem.

\begin{theorem}\label{larsonthm}
	For any graph $G$, there exists a unique set $L(G)\subseteq V(G)$ such that
	\begin{enumerate}
		\item $\alpha(G)=\alpha(G[L(G)])+\alpha(G[V(G)-L(G)])$,
		\item $G[L(G)]$ is a König--Egerváry graph,
		\item for every non-empty independent set $I$ in $G[V(G)-L(G)]$, we have $\left|N(I)\right|>\left|I\right|$, and
		\item for every maximum critical independent set $J$ of $G$, we have $L(G)=J\cup N(J)$.
	\end{enumerate}
\end{theorem}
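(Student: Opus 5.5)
This is Larson's decomposition theorem, quoted from \cite{larson2011critical}, so the plan is to reconstruct Larson's argument. Everything rests on the theory of critical independent sets. Write $d(J)=|J|-|N(J)|$ for independent $J$, and call $d_c(G)=\max_J d(J)$ the critical difference.

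\textbf{Step 1: lattice properties.} I would first show that if $I,J$ are critical independent sets, then so are $I\cup J$ and $I\cap J$; more precisely, $I\cup J$ is independent and $d(I\cup J)+d(I\cap J)\ge d(I)+d(J)$. The standard route goes through Zhang's characterization: $I$ is critical iff there is a matching from $N(I)$ into $I$ and $N(I)$ has no deficiency relative to any independent set. Concretely, I would use that for a critical $I$ there is a matching of $N(I)$ into $I$ (by Hall's theorem applied to subsets of $N(I)$, since otherwise removing a deficient part increases $d$). Independence of $I\cup J$ follows because an edge $uv$ with $u\in I\setminus J$ and $v\in J\setminus I$ would put $v\in N(I)$; the matchings then let one enlarge a difference, contradicting criticality. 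This step I expect to be the main obstacle; I would try submodularity of $|N(\cdot)|$, namely $|N(I\cup J)|+|N(I\cap J)|\le|N(I)|+|N(J)|$, and handle independence separately.

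\textbf{Step 2: uniqueness of $L(G)$.} From Step 1, the union of all maximum critical independent sets is itself critical, so the maximum critical independent set is essentially canonical. Two maximum critical independent sets $J_1,J_2$ have the same size, and their union is critical, hence equal to both. So it suffices to check that $J\cup N(J)$ does not depend on the choice of $J$, which is (4). I then define $L(G)=J\cup N(J)$.

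\textbf{Step 3: properties (1)--(3).} For (2), the matching of $N(J)$ into $J$ gives $\mu(G[L])\ge|N(J)|$, and $J$ is independent with $|J|+|N(J)|=|L|$. Hence $\alpha(G[L])+\mu(G[L])\ge|L|$, so $G[L]$ is König--Egerváry. For (1), Butenko--Trukhanov/Larson show that a critical independent set extends to a maximum independent set of $G$. Taking $S\supseteq J$ maximum, $S\cap N(J)=\emptyset$, and so $\alpha(G)=|J|+|S\setminus L|\le\alpha(G[L])+\alpha(G-L)$; the reverse inequality holds because $\alpha(G[L])=|J|$ (no vertex of $N(J)$ can help, by the matching) and $N_G(J)\subseteq L$. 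For (3), suppose some nonempty independent $I\subseteq V-L$ had $|N(I)|\le|I|$ in $G-L$. Then $J\cup I$ is independent (since $I\cap N(J)=\emptyset$) and $d(J\cup I)\ge d(J)+|I|-|N_{G-L}(I)|\ge d(J)$. That makes $J\cup I$ critical and larger, contradicting maximality.
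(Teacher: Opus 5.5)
Your Step 1 is false, and Step 2 depends on it. The union of two critical independent sets need not be independent. In $K_2$ with vertices $a,b$ we have $d(\emptyset)=d(\{a\})=d(\{b\})=0$, so $\{a\}$ and $\{b\}$ are both \emph{maximum} critical independent sets, yet $\{a,b\}$ is not independent. The same happens with the two colour classes of any bipartite graph with a perfect matching. So maximum critical independent sets are not unique, and the sentence ``their union is critical, hence equal to both'' fails. No argument of the type ``an edge between $I\setminus J$ and $J\setminus I$ lets one increase $d$'' can be made to work. As a result you never show that $J\cup N(J)$ is independent of the choice of $J$. That is exactly statement (4), and it is the entire content of the existence and uniqueness of $L(G)$. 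Your Step 3 is fine once $L=J\cup N(J)$ for a fixed maximum critical $J$. For (1) you do not even need the extension result, since $\alpha(G)\le\alpha(G[L])+\alpha(G-L)$ is trivial.

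The missing ingredient is a direct proof that $J_1\cup N(J_1)=J_2\cup N(J_2)$. Supermodularity is the right tool, but apply it to $d(X)=|X|-|N(X)|$ on \emph{arbitrary} vertex sets. Combine it with the elementary fact that $d(X)\le d_c$ for every $X$, because $X\setminus N(X)$ is independent and $d(X\setminus N(X))\ge d(X)$. This yields $d(J_1\cup J_2)=d(J_1\cap J_2)=d_c$.

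Now put $S=J_2\cap N(J_1)$ and $I'=J_2\setminus(J_1\cup N(J_1))$. Then $W=J_1\cup I'$ is independent, and $N(J_1)\cup N(I')$ is disjoint from $J_1$. Hence $d_c=d(J_1\cup J_2)\le d(W)+|S|-|N(S)\cap J_1|\le d(W)$, the last step by Hall's condition for the matching of $N(J_1)$ into $J_1$. Maximality of $J_1$ forces $I'=\emptyset$, i.e.\ $J_2\subseteq J_1\cup N(J_1)$, and symmetrically $J_1\subseteq J_2\cup N(J_2)$.

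Finally, let $C=J_1\cap J_2$ and $A_i=J_i\setminus C$. The equality $d(C)=d(J_2)$ gives $|N(A_2)\setminus N(C)|=|A_2|=|A_1|$. Every vertex of $A_1$ lies in $N(J_2)$ but not in $N(C)$, so $A_1=N(A_2)\setminus N(C)$. Therefore $N(J_2)=N(C)\cup A_1\subseteq J_1\cup N(J_1)$, and by symmetry the two closed neighbourhoods coincide.
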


In \cite{agusvalenota}, it was shown that for every graph with a perfect matching, the Larson decomposition and the SD--KE decomposition coincide.

\begin{theorem}[\label{agusvale}\cite{agusvalenota}]
	If $G$ is a graph with a perfect matching, then $KE(G)=L(G)$.
\end{theorem}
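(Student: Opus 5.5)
The statement is \cref{agusvale}: if $G$ has a perfect matching, then $KE(G)=L(G)$. I would prove both inclusions using a fixed perfect matching $M$. By \cref{safe}, \cref{12ews} and \cref{asd1s}, $SD(G)$ is the set of vertices lying in an $M$-Jposy for some perfect matching $M$. There are no unsaturated vertices, so flowers do not occur. The Larson set is described by \cref{larsonthm}: $L(G)=J\cup N(J)$ for a maximum critical independent set $J$.

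\emph{Step 1: $L(G)\subseteq KE(G)$.} Since $G$ has a perfect matching, Hall's condition gives $|N(I)|\ge|I|$ for every independent set $I$. Critical independence of the empty set then forces $|J|=|N(J)|$. Hence $M$ matches $J$ bijectively onto $N(J)$, and no $M$-edge leaves $L=J\cup N(J)$.

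Suppose some $v\in L$ lies in an $M$-Jposy. I would follow the structure from $v$. Any $M$-alternating walk that enters $J$ must do so along a non-matching edge from $N(J)$, because $J$ is independent and its only neighbours lie in $N(J)$. The matching edges inside $L$ pair $J$ with $N(J)$. Orient the alternation so that from $J$ one leaves by an $M$-edge and from $N(J)$ by a non-$M$-edge. Then a walk that started in $L$ in the appropriate parity cannot return with the wrong parity, and $L$ behaves bipartitely with respect to $M$-alternation.

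A blossom needs an odd cycle whose alternation closes at the base. A careful parity argument on the posy's walks should show that they cannot both touch $L$ and close into blossoms. The delicate case is a posy that passes through $L$ and exits via an edge from $N(J)$ to $V-L$. I expect this case to be the main obstacle: the two arcs must be tracked with parities, and the posy's two $mm$ endings used.

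\emph{Step 2: $KE(G)\subseteq L(G)$.} I would take $H=G[V-L]$. It has the perfect matching $M$ restricted to $V-L$, and by \cref{larsonthm}(3) it satisfies the strict Hall condition $|N_H(I)|>|I|$ for every nonempty independent $I$. The aim is to show every vertex of $H$ lies in an $M$-Jposy inside $H$, so $V-L\subseteq SD(G)$.

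For a vertex $v$, grow the set $A$ of vertices reachable from $v$ by even $M$-alternating walks starting with a non-$M$-edge, as in Edmonds' search. If no blossom is found, $A$ is independent and $N(A)=M(A)$, which gives $|N(A)|=|A|$ and contradicts strictness. So an alternating walk starting at $M(v)$ (via the edge $vM(v)$) reaches an odd alternating closed walk, that is, a blossom. Applying the same argument from $M(v)$ yields a second blossom reached from $v$. Concatenating the two walks with the edge $vM(v)$ gives an $mm$-$M$walk between two blossom bases, hence an $M$-Jposy containing $v$. This is where the flexibility of Jposies from \cref{12ews} is essential, since the walks need not be paths and the blossoms may intersect.
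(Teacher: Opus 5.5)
The paper does not prove this statement; it imports it from the literature. Your proposal therefore has to stand on its own, and it does not yet: Step~1 is unproved.

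\textbf{Step 2 is essentially fine.} The condition $|N_H(I)|>|I|$ is what Larson's item~3 gives; it also follows from maximality of $J$, since otherwise $J\cup I$ would be a larger critical independent set. Then $v\in A$ and $N_H(A)\subseteq M(A)$ force an edge inside $A$. That edge closes an odd alternating walk at $v$. From it you extract a blossom whose base is reached by an even walk starting with a non-$M$ edge, by taking the first repeated vertex and shortcutting even alternating cycles.

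\textbf{The gap is Step 1.} The claim that no $M$-Jposy meets $L$ is the entire content of $L\subseteq KE(G)$, and you only assert that a parity argument should give it. The ingredients you list also point the wrong way.
\begin{itemize}
\item An alternating walk can enter $J$ along a matching edge, from $y\in N(J)$ to $M(y)$.
\item The orientation you fix (leave $J$ by an $M$-edge, leave $N(J)$ by a non-$M$ edge) is exactly the regime in which a walk can escape $L$ through an edge $N(J)\to V-L$. That is why your exit case looks unmanageable.
\item Blossoms alone can lie inside $L$, so any argument must use the posy's $mm$-walk. For example, take a triangle $c_0c_1c_2$ with a pendant vertex $j$ at $c_0$ and $M=\{c_0j,c_1c_2\}$. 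Then $J=\{j,c_1\}$, so $L=V$, yet the blossom with base $c_0$ lies in $L$.
\end{itemize}

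\textbf{The missing idea is that the opposite orientation is absorbing.}
\begin{itemize}
\item If an alternating walk arrives at $y\in N(J)$ by a non-$M$ edge, it must continue along $yM(y)$ into $J$. From $J$ it must leave by a non-$M$ edge to a neighbour, which lies in $N(J)$. So it never leaves $L$ again and from then on uses only edges between $J$ and $N(J)$.
\item Every edge from $V-L$ into $L$ is a non-$M$ edge ending in $N(J)$. Hence entering $L$ triggers this trap.
\item Traverse the whole Jposy as the closed alternating walk $Z=B_1WB_2W^{-1}$, each blossom run from its base. It alternates at every vertex, including the junctions and the wrap-around.
\item If $Z$ meets $L$, it meets $N(J)=M(J)$. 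At such a visit exactly one of $Z$ and $Z^{-1}$ arrives by a non-$M$ edge.
\item Since $Z$ is closed, the trap then covers every edge of $Z$. So $Z$ lies in the bipartite graph of $J$--$N(J)$ edges, contradicting that $Z$ contains the odd cycle $B_1$.
\end{itemize}
Every perfect matching maps $J$ onto $N(J)$, so this works for all $M$ and gives $L\subseteq KE(G)$. Without it, your argument proves only $KE(G)\subseteq L(G)$.
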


The following theorem provides a Hall--Tutte type characterization for Sterboul--Deming graphs with a perfect matching. Specifically, a graph is a Sterboul--Deming graph if and only if it satisfies Hall’s condition with excess on the independent sets and Tutte’s condition on every vertex subset.

\begin{theorem}\label{agustinachat}
	A graph $G$ is a Sterboul--Deming graph with a perfect matching if and only if
	\begin{enumerate}
		\item $\left|N(S)\right|>\left|S\right|$ for every non-empty $S\in\Omega^{*}(G)$, and
		\item $\textnormal{odd}(G-S)\le\left|S\right|$ for every $S\subseteq V(G)$.
	\end{enumerate}
\end{theorem}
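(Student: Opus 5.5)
The plan is to reduce everything to \cref{agusvale} and \cref{larsonthm}. Condition 2 is exactly Tutte's condition, so it holds if and only if $G$ has a perfect matching. The theorem therefore reduces to the following claim: for a graph $G$ with a perfect matching, $KE(G)=\emptyset$ if and only if $|N(S)|>|S|$ for every non-empty independent set $S$. By \cref{agusvale}, $KE(G)=L(G)$ whenever $G$ has a perfect matching. So it suffices to show that $L(G)=\emptyset$ if and only if condition 1 holds.

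For the implication from condition 1 to $L(G)=\emptyset$, use item 4 of \cref{larsonthm}. Let $J$ be a maximum critical independent set. The empty set is independent with $|\emptyset|-|N(\emptyset)|=0$, so criticality gives $|J|-|N(J)|\ge 0$. If $J\neq\emptyset$, condition 1 gives $|J|-|N(J)|<0$, a contradiction. Hence $J=\emptyset$ and $L(G)=J\cup N(J)=\emptyset$.

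For the converse, suppose $L(G)=\emptyset$. Then $G[V(G)-L(G)]=G$, and item 3 of \cref{larsonthm} says directly that $|N(I)|>|I|$ for every non-empty independent set $I$ of $G$. This is condition 1.

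Assembling the two directions:
\begin{itemize}
\item If $G$ is Sterboul--Deming with a perfect matching, Tutte's theorem gives condition 2. Also $L(G)=KE(G)=\emptyset$, which gives condition 1.
\item Conversely, condition 2 yields a perfect matching via Tutte. Condition 1 then yields $L(G)=\emptyset$, and \cref{agusvale} gives $KE(G)=\emptyset$.
\end{itemize}

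The only delicate point is the role of the empty set in the definition of critical independent set. That $\emptyset\in\Omega^{*}(G)$ is what forces the critical difference to be non-negative. This holds by the convention stated in the preliminaries, so no real obstacle is expected.
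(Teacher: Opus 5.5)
Your proposal is correct and takes essentially the same route as the paper. Tutte's theorem handles condition~2 in both directions, and \cref{agusvale} identifies $KE(G)$ with $L(G)$; item~3 of \cref{larsonthm} then yields condition~1 from $L(G)=\emptyset$, while item~4, together with the fact that $\emptyset$ is the only critical independent set under condition~1 (which your use of $\emptyset\in\Omega^{*}(G)$ makes explicit), gives $L(G)=\emptyset$.
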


\begin{proof}
	Let $G$ be a Sterboul--Deming graph with a perfect matching.
	Then, by \cref{agusvale}, $SD(G)=V(G)-L(G)=V(G)$. The first condition follows from item~3 in \cref{larsonthm}, while the second follows from Tutte’s theorem.
	
	Conversely, suppose that (1) and (2) hold for a graph $G$.
	By Tutte’s theorem, $G$ has a perfect matching, and by (1),
	the empty set is the only critical independent set. Hence, by
	\cref{larsonthm} and \cref{agusvale}, we have
	$$KE(G)=L(G)=\emptyset\cup N(\emptyset)=\emptyset,$$
	that is, $G$ is a Sterboul--Deming graph.
\end{proof}

A spanning subgraph $S$ of $G$ is called a \emph{Sachs subgraph} of $G$ if every component of $S$ is either $K_2$ or a cycle. Let $\Sachs{G}$ denote the set of all Sachs subgraphs of $G$. If $G$ has a perfect matching, then for every $M \in \mathcal{M}(G)$ we have $M \in \Sachs{G}$. In particular, if $G$ has a perfect matching, then $|\Sachs{G}| \geq 1$ \cite{s3}. 

In \cite{ksachscritical}, the concept of a \emph{$k$-Sachs critical graph} was introduced: a graph $G$ is $k$-Sachs critical if $G-S$ contains at least one Sachs subgraph for every subset $S\subseteq V(G)$ with $\left|S\right|=k$. This concept yields the following characterization of Sterboul--Deming graphs (\cref{panejau}).

\begin{theorem}[\cite{ksachscritical}\label{ksacriticahar}]
	A graph is $k$-Sachs critical if and only if
	$$i(G-S)\le \left|S\right|-k$$
	for every $S\subseteq V(G)$ such that $\left|S\right|\ge k$.
\end{theorem}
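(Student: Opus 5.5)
The plan is to reduce the statement to the case $k=0$, i.e.\ to the classical criterion for the existence of a Sachs subgraph, and then handle the shift by $k$ with a simple set-splitting argument.

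\textbf{Step 1 (the case $k=0$).} I would first prove that a graph $H$ has a Sachs subgraph if and only if $i(H-S)\le |S|$ for every $S\subseteq V(H)$. This is Tutte's perfect $2$-matching theorem, and I would prove it through the bipartite double cover $B(H)$. Its vertex set is $V(H)\times\{1,2\}$, and $(u,1)(v,2)$ is an edge whenever $uv\in E(H)$.

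A perfect matching of $B(H)$ is the same thing as a permutation $\sigma$ of $V(H)$ with $v\sigma(v)\in E(H)$ for all $v$. Since $H$ has no loops, $\sigma$ has no fixed points. Its $2$-cycles then give $K_2$ components, and its longer cycles give cycles of $H$, so we obtain a Sachs subgraph. Conversely, a Sachs subgraph gives such a $\sigma$: swap the two endpoints of each $K_2$, and orient each cycle. Hence $H$ has a Sachs subgraph if and only if $B(H)$ satisfies Hall's condition, which amounts to $|N(X)|\ge|X|$ for all $X\subseteq V(H)$.

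It remains to show that Hall's condition is equivalent to $i(H-S)\le|S|$ for all $S$.
\begin{itemize}
\item If $i(H-S)>|S|$, let $I$ be the set of isolated vertices of $H-S$. Then $N(I)\subseteq S$, so $|N(I)|<|I|$ and Hall fails.
\item If $|N(X)|<|X|$ for some $X$, put $I=X\setminus N(X)$. This set is independent. Also $N(I)\cap X=\emptyset$, because a neighbour of $I$ inside $X$ would place that vertex of $I$ in $N(X)$. Hence $N(I)\subseteq N(X)\setminus X$, which gives
\[
|N(I)|\le |N(X)|-|N(X)\cap X|<|X|-|N(X)\cap X|=|I|.
\]
Taking $S=N(I)$, every vertex of $I$ is isolated in $H-S$, so $i(H-S)\ge|I|>|S|$.
\end{itemize}

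\textbf{Step 2 (shifting by $k$).} By Step~1, $G$ is $k$-Sachs critical if and only if, for every $T$ with $|T|=k$ and every $S'\subseteq V(G)\setminus T$, we have $i(G-T-S')\le|S'|$.

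For the forward direction, take any $S$ with $|S|\ge k$ and choose $T\subseteq S$ with $|T|=k$. With $S'=S\setminus T$ this gives $i(G-S)\le|S|-k$.

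For the converse, given $T$ and $S'$, set $S=T\cup S'$. Then $|S|=k+|S'|\ge k$, and the hypothesis yields $i(G-T-S')\le|S'|$. So $G-T$ has a Sachs subgraph for every $T$ with $|T|=k$, which is exactly $k$-Sachs criticality.

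\textbf{Main obstacle.} The only real work is Step~1, namely the correspondence between Sachs subgraphs and perfect matchings of the double cover, together with the reduction of Hall's condition to isolated vertices. Step~2 is pure bookkeeping.
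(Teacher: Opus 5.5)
The paper gives no proof of this statement. It is quoted from \cite{ksachscritical} and used only with $k=1$, in the proof of Theorem~\ref{panejau}, so there is no in-paper argument to compare yours against.

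Your proposal is a correct, self-contained proof.

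\textbf{Step 1.} This is Tutte's perfect $2$-matching theorem, and your derivation of it is sound.
\begin{itemize}
\item Perfect matchings of the bipartite double cover correspond to fixed-point-free permutations $\sigma$ with $v\sigma(v)\in E$. The cycle decomposition of such a $\sigma$ gives exactly the $K_2$ and cycle components. A cycle of $\sigma$ of length $m\ge 3$ uses $m$ distinct edges, because the graph is simple.
\item Since $B(H)$ is balanced, Hall's condition on one side is the right criterion for a perfect matching.
\item Your reduction from a Hall violator $X$ to $I=X\setminus N(X)$ is correct. It uses $N(I)\cap X=\emptyset$, hence $N(I)\subseteq N(X)\setminus X$ and $|N(I)|<|I|$, with $S=N(I)$ then witnessing $i(H-S)>|S|$.
\end{itemize}

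\textbf{Step 2.} This is exactly the right bookkeeping. $k$-Sachs criticality is the conjunction, over all $k$-sets $T$, of Tutte's condition for $G-T$. Writing $S=T\cup S'$ with $T\cap S'=\emptyset$ turns it into $i(G-S)\le |S|-k$ for all $|S|\ge k$, in both directions.

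What your route adds, compared with simply citing the result, is transparency: the theorem is just Tutte's $2$-matching criterion applied to every $G-T$, and it needs nothing beyond Hall's theorem. That makes the paper's use of it in Theorem~\ref{panejau} fully self-contained.
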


\begin{theorem}\label{panejau}
	Let $G$ be a graph with a perfect matching. Then $G$ is
	a Sterboul--Deming graph if and only if $G$ is a $1$-Sachs critical
	graph.
\end{theorem}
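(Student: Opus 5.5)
The plan is to reduce the statement to \cref{agustinachat} and \cref{ksacriticahar}. Since $G$ has a perfect matching, Tutte's theorem gives condition~(2) of \cref{agustinachat} for free. Hence $G$ is a Sterboul--Deming graph if and only if
\[
\left|N(I)\right|>\left|I\right| \quad \text{for every non-empty } I\in\Omega^{*}(G).
\]
By \cref{ksacriticahar} with $k=1$, $G$ is $1$-Sachs critical if and only if
\[
i(G-S)\le\left|S\right|-1 \quad \text{for every } S\subseteq V(G) \text{ with } \left|S\right|\ge1.
\]
So it remains to show that these two combinatorial conditions are equivalent for a graph with a perfect matching.

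For the forward direction, assume the Hall-type condition with excess holds, and fix a non-empty $S\subseteq V(G)$. Let $I$ be the set of isolated vertices of $G-S$.
\begin{itemize}
\item If $I=\emptyset$, then $i(G-S)=0\le\left|S\right|-1$ because $S\neq\emptyset$.
\item Otherwise $I$ is a non-empty independent set. Every neighbour of a vertex of $I$ lies in $S$, so $N(I)\subseteq S$. Therefore $\left|S\right|\ge\left|N(I)\right|>\left|I\right|=i(G-S)$, which gives $i(G-S)\le\left|S\right|-1$.
\end{itemize}

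For the converse, argue by contraposition. Suppose there is a non-empty independent set $I$ with $\left|N(I)\right|\le\left|I\right|$, and set $S=N(I)$.
\begin{itemize}
\item First check that $S\neq\emptyset$. If $S$ were empty, any vertex of $I$ would be isolated in $G$, which is impossible since $G$ has a perfect matching.
\item In $G-S$, every vertex of $I$ is isolated, because $I$ is independent and all its neighbours have been deleted.
\item Hence $i(G-S)\ge\left|I\right|\ge\left|S\right|>\left|S\right|-1$, which violates the $1$-Sachs critical condition.
\end{itemize}

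No step looks like a serious obstacle once \cref{agustinachat} and \cref{ksacriticahar} are assumed. The only point needing care is the degenerate case $N(I)=\emptyset$ in the converse, and this is exactly where the hypothesis that $G$ has a perfect matching is used a second time (its first use being to discard Tutte's condition).
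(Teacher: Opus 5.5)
Your proof is correct and follows the paper's argument essentially verbatim. You reduce to \cref{agustinachat}, with Tutte's condition automatic, and to \cref{ksacriticahar} with $k=1$. You take $I$ to be the isolated vertices of $G-S$ in one direction and $S=N(I)$ in the other. You are also slightly more careful than the paper, which skips the case $I=\emptyset$ (where its strict inequality $|I|<|N(I)|$ fails) and the case $N(I)=\emptyset$ (where applying \cref{ksacriticahar} requires $|S|\ge 1$).
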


\begin{proof}
	Suppose that $G$ is a Sterboul--Deming graph with a perfect matching.
	Let $\emptyset\neq S\subseteq V(G)$ and let $I$ be the set of isolated vertices
	in $G-S$; then $N(I)\subseteq S$. Hence, by \cref{agustinachat},
	\[
	i(G-S)=\left|I\right|<\left|N(I)\right|\le\left|S\right|.
	\]
	Therefore, by \cref{ksacriticahar}, $G$ is a $1$-Sachs critical graph.	
	Conversely, suppose that $G$ is a $1$-Sachs critical graph
	with a perfect matching. Since $G$ has a perfect matching,
	it satisfies item~2 of \cref{agustinachat}. On the other hand,
	let $\emptyset\neq S\subseteq\Omega^{*}(G)$. Then, by \cref{ksacriticahar},
	\[
	\left|S\right|\le i(G-N(S))\le\left|N(S)\right|-1.
	\]	
	Thus, item~1 of \cref{agustinachat} is satisfied,
	as desired.
\end{proof}

\begin{theorem}
	[\cite{edmonds1965paths,gallai1964maximale}\label{ge} Gallai-Edmonds Structure Theorem]
	Given a graph $G$, let 
	\begin{align*}
		D(G) & :=\{ v : \textnormal{there exists a maximum matching missing } v \}, \\
		A(G) & :=\{ v : v \textnormal{ is a neighbor of some } u \in D(G), \textnormal{ but } v \notin D(G) \}, \\
		C(G) & := V(G) \setminus (D(G) \cup A(G)).
	\end{align*}
	\noindent If $G_1,\dots,G_k$ are the components of $G[D(G)]$
	and $M$ is a maximum matching of $G$, then
	\begin{enumerate}
		\item $M$ covers $C(G)$ and matches $A(G)$ into distinct components of $G[D(G)]$.
		\item Each $G_i$ is a factor-critical graph, and $M$ restricted to $G_i$
		is a near-perfect matching. 
		\item If $\emptyset\neq S\s A$, then $N_{G}(S)$ has a vertex in at least
		$\left|S\right|+1$ of $G_{1},\dots,G_{k}$. 
	\end{enumerate}
\end{theorem}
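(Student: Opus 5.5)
The statement is the classical Gallai--Edmonds Structure Theorem. I would prove it by identifying $A(G)$ as a barrier attaining the Tutte--Berge formula. I would set it up in three stages and keep all matchings maximum throughout.

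\emph{Stage 1 (Gallai's lemma and stability).} First I prove Gallai's lemma: if $H$ is connected and $D(H)=V(H)$, then $H$ is factor-critical. The standard argument is by contradiction. Take $u,v$ with $d(u,v)$ minimal such that no maximum matching misses both $u$ and $v$. If $uv$ were an edge, adding it to a matching missing both would enlarge the matching, so $d(u,v)\ge 2$. Pick an intermediate vertex $w$ on a shortest $u$--$v$ path, and maximum matchings $M_u,M_w$ missing $\{u,w\}$ and $\{w,v\}$ respectively. Analysing the $M_u\triangle M_w$ alternating path starting at $w$ produces a maximum matching missing $u$ and $v$, a contradiction. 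It follows that each maximum matching misses at most one vertex, so the deficiency is $1$. Next comes the stability lemma: for $a\in A(G)$ we have $D(G-a)=D(G)$, $A(G-a)=A(G)-a$, $C(G-a)=C(G)$, and $\mu(G-a)=\mu(G)-1$. Since $a\notin D(G)$, every maximum matching covers $a$, which gives $\mu(G-a)=\mu(G)-1$. The set equalities then follow by comparing which vertices are missed by maximum matchings before and after deleting $a$, using alternating-path exchanges.

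\emph{Stage 2 (structure of $G-A$).} I apply the stability lemma repeatedly to delete all of $A$. This gives $\mu(G-A)=\mu(G)-|A|$ and $D(G-A)=D(G)$, with no edges between $D$ and $C$ in $G-A$. Each component $G_i$ of $G[D]$ is then a component of $G-A$ with $D(G_i)=V(G_i)$, so Gallai's lemma shows it is factor-critical. In particular each $G_i$ is odd. The graph $G[C]$ has a perfect matching, because $D(G-A)\cap C=\emptyset$. Consequently
\[
\mu(G)=|A|+\tfrac12\bigl(|V|-|A|-k\bigr),
\]
so the deficiency equals $k-|A|=\textnormal{odd}(G-A)-|A|$ and $A$ attains the Tutte--Berge bound. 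The standard counting argument then gives items 1 and 2 for \emph{every} maximum matching $M$. Any maximum matching must saturate $A$, match $A$ into distinct odd components, and be near-perfect on each $G_i$ and perfect on $C$; otherwise its size would fall below the Tutte--Berge bound.

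\emph{Stage 3 (surplus condition, item 3).} This is where I expect the main difficulty. Suppose $\emptyset\ne S\subseteq A$ and $N(S)$ meets at most $|S|$ components of $G[D]$. By item 1, $M$ matches $S$ into distinct components, so $N(S)$ meets exactly the $|S|$ components $G_{i_1},\dots,G_{i_{|S|}}$ hit by $M(S)$. Every maximum matching has deficiency $k-|A|\ge 1$ (note that $A\neq\emptyset$ is not needed). Also $D\neq\emptyset$ whenever $A\neq\emptyset$, so some component $G_j$ contains an $M$-exposed vertex, and this $G_j$ is not among the $G_{i_t}$.

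The first approach I would try is the barrier $A'=A\setminus S$. Removing $S$ from the barrier merges $S$ with the $|S|$ components $G_{i_t}$, which yields a single odd piece or several pieces. Counting gives $\textnormal{odd}(G-A')-|A'|\ge (k-|S|)-(|A|-|S|)+(\text{at least one odd merged piece})$. This is at least the deficiency, with a parity check forcing at least one odd piece. Hence the deficiency is also attained with $A'$ in place of $A$. The Stage 2 counting applied to $A'$ would then force every maximum matching to leave a vertex of the merged piece containing $S$ uncovered in a way that lets an alternating swap expose some $a\in S$. That contradicts $a\notin D(G)$.

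If the parity bookkeeping there becomes messy, the fallback is a direct alternating-path argument. Starting from an exposed vertex and moving through the factor-critical components $G_{i_t}$, whose near-perfect matchings allow rerouting to any vertex, I would build an even $M$-alternating path ending at a vertex of $S$. Swapping along this path gives a maximum matching missing that vertex, which is the same contradiction.
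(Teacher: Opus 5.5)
The paper does not prove this statement; it only quotes the classical theorem of Edmonds and Gallai, so there is no in-paper route to compare with. Your architecture is the standard textbook proof: Gallai's lemma, the stability lemma, $A(G)$ as a barrier attaining the Tutte--Berge bound, and counting for items 1 and 2. Stage 2 is fine given Stage 1, but two steps fail as written.

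\textbf{Stage 1.} Your proof of Gallai's lemma has the extremal choice inverted. Suppose $(u,v)$ minimizes $d(u,v)$ among pairs that \emph{no} maximum matching misses simultaneously. Then adjacent pairs already qualify, so $d(u,v)\ge 2$ cannot be obtained. Moreover, if $M_u$ and $M_w$ both miss $w$, then $w$ is isolated in $M_u\triangle M_w$, so there is no path starting at $w$. Finally, the statement your argument would establish (every pair is missed by a common maximum matching) is false as soon as $H$ has an edge.

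The correct argument runs as follows:
\begin{enumerate}
\item Assume the deficiency is at least $2$, and choose a maximum matching $M$ and $M$-exposed $u,v$ with $d(u,v)$ minimum.
\item By minimality, $M$ covers an internal vertex $w$ of a shortest $u$--$v$ path. Take a maximum matching $M'$ missing $w$.
\item The component $P$ of $M\triangle M'$ at $w$ starts with an $M$-edge. It must end with an $M'$-edge at an $M$-exposed vertex, since otherwise it is $M'$-augmenting.
\item Then $M\triangle P$ is a maximum matching missing $w$ and one of $u,v$, contradicting minimality.
\end{enumerate}

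The stability lemma also has a gap. The nontrivial inclusion $D(G-a)\subseteq D(G)$ is exactly where you must use a neighbour $v\in D(G)$ of $a$. Take $M$ maximum in $G$ missing $v$ and $M'$ maximum in $G-a$ missing $u$, and follow the $M\triangle M'$ path from $u$; if it ends at $a$, add the edge $av$. Being covered by every maximum matching is not enough: in the path with edges $xa,ay,yz$, the vertex $a$ is always covered, yet $x\in D(G-a)\setminus D(G)$.

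\textbf{Stage 3.} Here the parity claim is false. Let $U$ be the union of the components of $G-A'$ meeting $S$. It consists of $S$, the $|S|$ odd graphs $G_{i_t}$, and possibly components of $G[C]$ adjacent to $S$ (which you omitted); each of the latter has even order. Hence $|U|\equiv 2|S|\equiv 0 \pmod 2$. In fact no component of $G-A'$ inside $U$ can be odd, since otherwise $\textnormal{odd}(G-A')-|A'|$ would exceed the deficiency.

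So every piece of $U$ is an even component of $G-A'$, and your own counting forces every maximum matching to be perfect on it. No vertex of $U$ is ever exposed, so the ``alternating swap exposing some $a\in S$'' never happens. The real contradiction is that $V(G_{i_t})\subseteq D(G)$, yet these vertices are covered by every maximum matching.

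More directly, pick $y\in V(G_{i_1})$ and a maximum matching $M'$ missing $y$. By item 2, no $M'$-edge joins $A$ to $G_{i_1}$. By item 1, $M'$ would then have to match $S$ into $|S|$ distinct components among at most $|S|-1$ remaining $G_{i_t}$, which is impossible. Your fallback sketch never explains how an alternating path from an exposed vertex, which lies outside $\bigcup_t G_{i_t}$, reaches these components, so it is not yet an argument.
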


For a graph $G$, we define $\textnormal{Posy}(G)$ as the subset
of vertices of $V(G)$ that belong to an $M$-Tposy for some maximum
matching $M$. Similarly, we define $\textnormal{Flower}(G)$ as
the set of vertices contained in some Tflower of $G$. Trivially, a
Sterboul--Deming graph with a perfect matching satisfies
$\textnormal{Posy}(G)-\textnormal{Flower}(G)=V(G)$,
since in this case we have $\textnormal{Flower}(G)=\emptyset$.
The following theorem shows that the converse of this statement
is also true. For this, we make use of the following auxiliary lemma.

\begin{lemma}[\label{kevinsdkege}\cite{kevinsekege}]
	Let $G$ be a graph and let $v\in(D(G)\cup A(G))\cap SD(G)$.
	Then there exists a maximum matching $M$ such that $v$ belongs to an $M$-Tflower.
\end{lemma}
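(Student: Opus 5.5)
The plan is to split according to whether $v\in D(G)$ or $v\in A(G)$. In each case I would build an $M$-Jflower, or directly an $M$-Tflower, through $v$ for a suitably chosen maximum matching $M$. Throughout I would use the Gallai--Edmonds Structure Theorem (\cref{ge}) to move freely between maximum matchings. Given one maximum matching, we may replace its restriction to any component $G_i$ of $G[D(G)]$ by another near-perfect matching of $G_i$ that leaves exposed the same vertex of $G_i$, and the result is still maximum.

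\emph{Case $v\in D(G)$ and the component $G_i$ containing $v$ is nontrivial.} Take a maximum matching $M$ missing $v$; it exists since $v\in D(G)$. By \cref{ge}, $M$ restricted to $G_i$ is a near-perfect matching $M_i$ that misses $v$. Pick a neighbour $w$ of $v$ in $G_i$, and let $M'$ be a near-perfect matching of $G_i$ missing $w$; it exists because $G_i$ is factor-critical. The symmetric difference $M_i\triangle M'$ contains an even alternating path from $v$ to $w$ that starts and ends with an $M_i$-edge. Adding the edge $wv$ closes an odd cycle with exactly $(|C|-1)/2$ edges in $M$, i.e.\ an $M$-blossom with base $v$. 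Since $v$ is $M$-exposed, this is an $M$-Tflower with a stem of length zero and root $v$.

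\emph{Remaining cases: $v\in A(G)$, or $v$ forms a trivial component of $G[D(G)]$.} Here I would use the hypothesis $v\in SD(G)$. By \cref{12ews}, there is a maximum matching $N$ such that $v$ lies in an $N$-Jposy or an $N$-Jflower. In either case there is an $N$-blossom $B$ together with an alternating walk $W$ that joins $v$ (or $N(v)$) to the base of $B$ and begins with the $N$-edge at $v$. Since $G$ has no perfect matching (because $A(G)\cup D(G)\neq\emptyset$), $v$ has a neighbour $d\in D(G)$; when $v\in D(G)$ is trivial, I would instead work from $N(v)\in A(G)$.
\begin{itemize}
\item I would then switch to a maximum matching $M$ that misses $d$. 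By \cref{ge}, $M$ matches $v$ into some component of $G[D(G)]$, and it can be chosen to agree with $N$ outside the components of $G[D(G)]$ that are touched.
\item The intended structure is the exposed vertex $d$, the edge $dv$, the matched edge $vM(v)$, and then the alternating continuation given by $W$ towards $B$. Together these form an $mm$-$M$walk from an exposed vertex to the base of an $M$-blossom, i.e.\ an $M$-Jflower containing $v$.
\item Shortcutting the walk at its first self-intersection, in the manner of \cref{12ews}, would then yield an $M$-Tflower through $v$.
\end{itemize}

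The main obstacle is the second case. The blossom and the walk come from $N$, while the exposed vertex comes from $M$, and the two matchings need not agree on the components of $G[D(G)]$ that the walk passes through. I would first try to choose $M$ so that $M\triangle N$ is a single even alternating path inside $D(G)\cup A(G)$ ending at $d$. I would then show that any place where $W$ meets this path either can be rerouted along $M\triangle N$, or itself produces a new odd cycle through $v$ reachable from $d$. Failing that, the fallback is to reduce to the first case: starting from $v$, follow alternating edges into a nontrivial factor-critical component of $G[D(G)]$, which must exist because $v$ lies in some blossom, and take the blossom there with the stem leading back through $v$.
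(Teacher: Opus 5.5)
\textbf{Case 1 is fine. Case 2, which carries the whole content of the lemma, is not proved.}

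Your first case is correct apart from a slip. The component of $M_i\triangle M'$ joining $v$ and $w$ starts at $v$ with an $M'$-edge and ends at $w$ with an $M_i$-edge; an alternating path that starts and ends with $M_i$-edges would have odd length. Adding $wv$ still gives an $M$-blossom with base $v$ and trivial stem.

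Three steps of your second case fail as written.
\begin{enumerate}
\item After you replace $N$ by a maximum matching $M$ missing $d$, the walk $W$ is no longer $M$-alternating. The symmetric difference $M\triangle N$ contains an even $N$-alternating path from an $N$-exposed vertex to $d$, and nothing stops it from running through $v$ (so $M(v)\neq N(v)$) or through $W$ and $B$. That path also changes partners of vertices of $A(G)$, so your claim that $M$ can agree with $N$ away from the ``touched'' components is unjustified. The rerouting you announce is never carried out.
\item Even an $M$-Jflower through $v$ would not finish the job. \cref{12ews} is only a set identity in which flowers and posies are lumped together. Shortcutting a walk at a repeated vertex can excise exactly the closed sub-walk containing $v$. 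Passing from ``$v$ lies on some J-structure'' to ``$v$ lies on an $M$-Tflower'' for $v\in D(G)\cup A(G)$ is precisely what the lemma asserts, so this step is circular.
\item Your treatment of a singleton $v\in D(G)$ (``work from $N(v)$'') has the same defect: a flower through $N(v)$ for a new matching need not contain $v$.
\end{enumerate}

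Your fallback rests on a false premise, since $v$ need not lie in any blossom. For example, $v$ can be an interior vertex of the connecting path of a posy.

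What is true, and where a proof could start, is a parity fact. Suppose $v\in A(G)$ lies in an $N$-Tposy, and take an alternating walk in the posy that starts with $vN(v)$ and closes an odd cycle. While the walk stays among $A(G)$ and singleton components of $G[D(G)]$, it alternates between the two sides, because singletons are adjacent only to $A(G)$ and $N$ matches $A(G)$ into $D(G)$. An odd cycle cannot alternate this way, so the walk must first enter a nontrivial component $H$ at a vertex $z$ with $N(z)\in A(G)$. Factor-criticality of $H$ then gives a blossom $B$ with base $z$ inside $H$, together with an $mm$-$N$path $P$ from $v$ to $z$.

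The missing ingredient is the stem: an alternating path from an exposed vertex that enters $v$ by a non-matching edge and avoids $P\cup B$. Take any such path and look at its first vertex in $V(P)\cup V(B)$.
\begin{itemize}
\item A first hit at a vertex of $D(G)$ is excluded by parity: it would give an even alternating path from an exposed vertex to $v$, forcing $v\in D(G)$.
\item A first hit at an $A(G)$-vertex of $P$ beyond $v$ is not excluded, and then the resulting Tflower misses $v$.
\end{itemize}

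This really happens. Take vertices $v,a_1,a_2,d_1,d_2,x,z,h_1,h_2$, edges $vd_1,vd_2,a_1d_1,a_1x,a_1z,a_2d_1,a_2d_2,a_2h_1$, and the triangle $zh_1h_2$; then $A(G)=\{v,a_1,a_2\}$.
\begin{itemize}
\item For $M=\{vd_1,a_1z,a_2d_2,h_1h_2\}$, the only $M$-Tflower is the stem $x,a_1,z$ with blossom $zh_1h_2$. This holds even though $v$ lies on the $M$-alternating walk $x,a_1,z,h_2,h_1,a_2,d_2,v,d_1,a_1,z$, and shortcutting that walk at $a_1$ discards $v$.
\item For $M'=\{vd_2,a_1d_1,a_2h_1,zh_2\}$, the stem $x,a_1,d_1,v,d_2,a_2,h_1$ with blossom $h_1zh_2$ is a Tflower through $v$.
\end{itemize}

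So the proof needs a real argument for re-choosing the maximum matching, for instance by exploiting item~3 of the Gallai--Edmonds Structure Theorem in the bipartite graph between $A(G)$ and the components of $G[D(G)]$. Your proposal does not supply this argument.
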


If a Sterboul--Deming graph has no perfect matching, then there exists
a vertex $v\in(D(G)\cup A(G))\cap SD(G)$. Hence, by \cref{kevinsdkege},
$\textnormal{Posy}(G)-\textnormal{Flower}(G)\neq V(G)$,
since $\textnormal{Flower}(G)\neq\emptyset$.
That is, we have the following result.

\begin{theorem}\label{1ds1a}
	A graph $G$ is a Sterboul--Deming graph with a perfect
	matching if and only if $\textnormal{Posy}(G)-\textnormal{Flower}(G)=V(G)$.
\end{theorem}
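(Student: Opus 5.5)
We prove the two implications separately. In both directions the key fact is that an $M$-Tflower needs a vertex left unsaturated by the maximum matching $M$: its root.

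\emph{Forward direction.} Suppose $G$ is a Sterboul--Deming graph with a perfect matching. Every maximum matching $M$ of $G$ is then perfect. Hence no vertex is $M$-unsaturated, and no $M$-Tflower exists, because the root of a flower is unsaturated. This gives $\textnormal{Flower}(G)=\emptyset$. Since $KE(G)=\emptyset$, every vertex lies in an $M$-Tposy or an $M$-Tflower for some maximum matching $M$. As no flowers exist, every vertex lies in some Tposy, so $\textnormal{Posy}(G)=V(G)$. Together these give $\textnormal{Posy}(G)-\textnormal{Flower}(G)=V(G)$.

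\emph{Reverse direction.} Suppose $\textnormal{Posy}(G)-\textnormal{Flower}(G)=V(G)$. This forces $\textnormal{Posy}(G)=V(G)$, so every vertex lies in an $M$-Tposy for some maximum matching $M$. By definition of $SD(G)$ this gives $SD(G)=V(G)$, so $G$ is Sterboul--Deming. It also forces $\textnormal{Flower}(G)=\emptyset$.

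It remains to show that $G$ has a perfect matching; we argue by contradiction. If not, the Gallai--Edmonds set $D(G)$ is nonempty, since some vertex is missed by a maximum matching. Pick $v\in D(G)$. Then $v\in D(G)\cap SD(G)$, because $SD(G)=V(G)$. By \cref{kevinsdkege} there is a maximum matching $M$ such that $v$ lies in an $M$-Tflower. So $v\in\textnormal{Flower}(G)$, contradicting $\textnormal{Flower}(G)=\emptyset$.

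The only subtle point is this last step, the appeal to \cref{kevinsdkege} to produce a flower. A direct construction is unnecessary because the lemma is available. One must still check that $D(G)\neq\emptyset$ whenever there is no perfect matching, which holds since any maximum matching then misses some vertex.
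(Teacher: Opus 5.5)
Your proof is correct and follows the paper's argument. In the forward direction, a perfect matching leaves no unsaturated root, so $\textnormal{Flower}(G)=\emptyset$ and $SD(G)=\textnormal{Posy}(G)$; in the converse, you apply \cref{kevinsdkege} to a vertex of $D(G)\cap SD(G)$ (the paper uses $(D(G)\cup A(G))\cap SD(G)$) to force a Tflower when there is no perfect matching, and you also spell out $SD(G)\supseteq\textnormal{Posy}(G)=V(G)$, which the paper leaves implicit.
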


Similarly, from \cref{kevinsdkege} we deduce the following theorem,
which is analogous to \cref{1ds1a} for graphs without a perfect matching and $C(G)=\emptyset$.

\begin{theorem}
	A graph $G$ is a Sterboul--Deming graph with $C(G)=\emptyset$
	if and only if $V(G)=\textnormal{Flower}(G)$.
\end{theorem}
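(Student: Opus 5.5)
We prove the two implications separately, relying on \cref{kevinsdkege} and the Gallai--Edmonds Structure Theorem (\cref{ge}). Since $C(G)=\emptyset$, we have $V(G)=D(G)\cup A(G)$.

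\emph{($\Rightarrow$)} Suppose $G$ is a Sterboul--Deming graph with $C(G)=\emptyset$. Then $SD(G)=V(G)$. Every vertex $v$ therefore lies in $(D(G)\cup A(G))\cap SD(G)$, and \cref{kevinsdkege} gives a maximum matching $M$ such that $v$ belongs to an $M$-Tflower. Hence $v\in\textnormal{Flower}(G)$, and so $V(G)=\textnormal{Flower}(G)$. This direction is immediate.

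\emph{($\Leftarrow$)} Suppose $V(G)=\textnormal{Flower}(G)$. Every vertex lies in an $M$-Tflower for some maximum matching $M$, so $SD(G)=V(G)$ by definition, and $G$ is a Sterboul--Deming graph. It remains to show that $C(G)=\emptyset$. Let $v\in V(G)$ and fix a maximum matching $M$ with $v$ in an $M$-Tflower $F$. Let $r$ be the root of $F$, which is $M$-unsaturated, so $r\in D(G)$. We show that every vertex of $F$ lies in $D(G)\cup A(G)$.

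\begin{enumerate}
\item \textbf{Stem.} The stem is an even $M$-alternating path from $r$. The vertices at even distance from $r$ along the stem can be made unsaturated by switching $M$ along the initial even segment. This keeps the matching maximum, so these vertices are in $D(G)$. The vertices at odd distance are adjacent to such vertices, so they are in $D(G)\cup A(G)$.
\item \textbf{Blossom.} The base $b$ of the blossom lies in $D(G)$ by the previous step. After switching along the stem we may assume $b$ is unsaturated. Every vertex $w$ of the blossom is then reachable from $b$ by an even $M$-alternating path going one way or the other around the odd cycle. Switching along that path leaves $w$ unsaturated, so $w\in D(G)$.
\end{enumerate}

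Thus $v\in D(G)\cup A(G)$ for every $v$, which gives $C(G)=\emptyset$.

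The main care point is the switching argument for the blossom vertices. One must check that the chosen even alternating path from the unsaturated base really ends with a matched edge at $w$, so that after switching $w$ is exposed while the matching size is preserved. This is the standard Edmonds fact that all blossom vertices become exposed, and we would verify it by parity on the two arcs of the cycle.
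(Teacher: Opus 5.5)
Your proof is correct and follows the paper's route. The forward direction is exactly \cref{kevinsdkege} applied to every vertex, since $C(G)=\emptyset$ and $SD(G)=V(G)$. The converse rests on $\textnormal{Flower}(G)\subseteq SD(G)$ together with the fact that every vertex of a flower lies in $D(G)\cup A(G)$; the paper leaves that fact implicit, and your switching argument along the stem and around the blossom verifies it correctly.
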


\section{Sterboul-Deming graphs}\label{sss2}

The first goal of this section is to prove a characterization theorem
for Sterboul--Deming graphs without a perfect matching. This result
is a reduction theorem: a graph $G$ is a Sterboul--Deming graph if and only if
its reduced form \emph{$\mathcal{R}(G)$} is a Sterboul--Deming graph.
This reduction arises naturally from the Gallai--Edmonds decomposition
of the graph. We begin with some preliminary ideas before stating
the main theorem.

\begin{theorem}[\cite{jaume2025confpart3}]
	$G[KE(G)]$ is a König--Egerváry graph, for every graph $G$.
\end{theorem}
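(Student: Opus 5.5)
Set $H:=G[KE(G)]$. I would prove the statement by contradiction: assume $H$ is not König--Egerváry, find a forbidden configuration inside $H$, and transport it into $G$. By \cref{safe}, for any maximum matching $N$ of $H$ there is an $N$-Tflower or an $N$-Sposy in $H$. The strategy is to choose a maximum matching $M$ of $G$ that is compatible with $H$. Then the $N$-configuration in $H$ becomes an $M$-Jflower or $M$-Jposy of $G$ that contains some vertex of $KE(G)$. By \cref{12ews} that vertex would lie in $SD(G)$, which is a contradiction.

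\textbf{Step 1: $KE(G)$ is closed under a maximum matching.} I first want to show that some maximum matching $M$ of $G$ satisfies $M(KE(G))\subseteq KE(G)$. The key point is that whenever $v\in SD(G)$ lies in an $M$-Jposy or $M$-Jflower, the partner $M(v)$ lies in the same structure. This holds because blossoms, stems and the connecting $mm$-walks are unions of whole $M$-edges together with the unmatched base or root. So $SD(G)$ is closed under $M$ for the matching realizing each vertex, and hence so is $KE(G)$.

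I need one $M$ that works simultaneously for all vertices. For this I would use the Gallai--Edmonds structure (\cref{ge}) together with \cref{kevinsdkege}. Vertices of $D(G)\cup A(G)$ that lie in $SD(G)$ lie in flowers, and $KE(G)\cap(D\cup A)$ should form whole components of $G[D]$ together with their $A$-partners. For $C(G)$, I would use \cref{agusvale} applied to $G[C(G)]$, where the matching is perfect. Given this, $M_H:=M\cap E(H)$ is a matching of $H$.

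\textbf{Step 2: $M_H$ is a maximum matching of $H$.} If not, then $H$ has an $M_H$-augmenting path $P$. Its endpoints are $M_H$-exposed in $H$. Because $KE(G)$ is closed under $M$, those endpoints are also $M$-exposed in $G$, so $P$ would be $M$-augmenting in $G$, contradicting the maximality of $M$.

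\textbf{Step 3: transport the configuration.} Apply \cref{safe} to $H$ with $N=M_H$. This yields an $M_H$-Tflower or $M_H$-Sposy in $H$. The Tposy or Sposy uses only blossoms and $mm$-paths of $M_H$, so it is also an $M$-Sposy of $G$. A Tflower's root is $M_H$-exposed and therefore $M$-exposed, so it is an $M$-Tflower of $G$. Either way, its vertices belong to $SD(G)$ by definition, yet they lie in $KE(G)$. This contradiction shows that $H$ is König--Egerváry.

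\textbf{Main obstacle.} The hard part is Step 1: producing a single maximum matching that is closed on $KE(G)$. Closure for each vertex separately is easy, but making it global needs the structural decomposition. My first attempt would be a symmetric-difference exchange argument. Given $M$ and an edge $uv\in M$ with $u\in KE(G)$ and $v\in SD(G)$, take the matching $M'$ witnessing $v\in SD(G)$. Then analyse the alternating component of $M\triangle M'$ through $v$, showing it can be swapped in without destroying any witness, and conclude that $u\in SD(G)$.
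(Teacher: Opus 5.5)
The paper does not prove this statement; it quotes it from \cite{jaume2025confpart3}, so your argument has to stand on its own. There is a genuine gap: Step~1 carries the whole theorem and is not proved. Steps~2 and~3 are sound once Step~1 is known. The augmenting-path argument works, and so does the transport of an $M_H$-Tflower or $M_H$-Sposy into $G$ via \cref{safe} and \cref{12ews}.

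The justification you give for Step~1 is a non sequitur. Each $v\in SD(G)$ has a witness matching $M_v$ with $M_v(v)$ in the same structure. That tells you nothing about a fixed maximum matching $M$ with $M(u)=v$ for some $u\in KE(G)$, because $M_v$ need not pair $v$ with $u$. Nor can you fix $M$ first and look for $M$-structures. Outside the perfect-matching setting of \cref{420thmkl}, a vertex of $SD(G)$ need not lie in any $M$-Jposy or $M$-Jflower for a prescribed maximum matching $M$. For example, let $G$ be the triangle $t_1t_2t_3$ with the pendant path $t_1ad$, and let $M=\{da,t_2t_3\}$. The only $M$-blossom has the $M$-exposed base $t_1$, so no $M$-structure contains $d$. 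Yet $d\in SD(G)$ via $M'=\{at_1,t_2t_3\}$. What you must establish is that $M(SD(G))\subseteq SD(G)$ for every maximum matching $M$, or at least for one. Your last paragraph only announces an exchange argument for this.

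The tools you list do not supply this closure as stated. On $C(G)$, applying \cref{agusvale} to $G[C(G)]$ describes $KE(G[C(G)])$. That set can differ from $KE(G)\cap C(G)$, because posies through $C(G)$ may use blossoms in $D(G)$. Take the triangle $c_1c_2c_3$, the edges $c_3c_4$, $c_4a$, $at_1$, $as_1$, and the triangles $t_1t_2t_3$ and $s_1s_2s_3$. Here $D(G)$ consists of the two triangles, $A(G)=\{a\}$ and $C(G)=\{c_1,c_2,c_3,c_4\}$. Since $G[C(G)]$ has a unique perfect matching and the leaf $c_4$, Algorithm~1 gives $KE(G[C(G)])=C(G)$. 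Yet the path $c_3c_4at_1$ joins the blossoms $c_1c_2c_3$ and $t_1t_2t_3$ into a Tposy for $M=\{c_1c_2,c_3c_4,at_1,t_2t_3,s_2s_3\}$, so $C(G)\subseteq SD(G)$.

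A workable plan is to split the closure statement according to \cref{ge}.
On $D(G)\cup A(G)$ it can be obtained from \cref{ge}, \cref{kev912de}, \cref{kevinsdkege} and \cref{matchisns12agus}. In some cases you also switch the witness matching along an even alternating path that starts at an exposed vertex.
On $C(G)$ you may replace the witness $M'$ by the maximum matching that agrees with $M$ on $E(G[C(G)])$ and with $M'$ elsewhere. This is legitimate because every maximum matching matches $C(G)$ into itself. You would still need a genuine argument there, for instance an analogue of \cref{420thmkl} for two maximum matchings whose symmetric difference consists of even alternating cycles.

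When $G$ has a perfect matching, Step~1 is exactly \cref{123qwe2}, since a perfect matching is a Sachs subgraph. In that case the theorem even follows at once from \cref{agusvale} and item~2 of \cref{larsonthm}.
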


\begin{theorem}[\label{420thmkl}\cite{jaume2025confpart3}]
	If $G$ is a graph with a perfect matching $M$, then for every vertex $v\in SD(G)$
	there exists an $M$-Jposy of $G$ that contains $v$.
\end{theorem}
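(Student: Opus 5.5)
The plan is to fix the perfect matching $M$ and transport a posy built for some other perfect matching back to $M$. Let $v\in SD(G)$. By \cref{12ews}, $SD(G)=JSD(G)$, so there is a maximum matching $M'$ such that $v$ lies in an $M'$-Jposy or an $M'$-Jflower. Since $G$ has a perfect matching, every maximum matching is perfect, and a flower needs an $M'$-unsaturated root. So $v$ lies in an $M'$-Jposy $P'$. Both $M$ and $M'$ are perfect, so $M\triangle M'$ is a disjoint union of even cycles that alternate between $M$ and $M'$. The whole proof is to reroute $P'$ along these cycles.

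The first step is to recast Jposies as single closed walks, so that the surgery is clean. I would prove the following for any perfect matching $N$. The vertex $v$ lies in some $N$-Jposy if and only if $v$ lies on a closed $N$-alternating walk $W$ of odd length whose first and last edges are both matched; at the junction vertex the two consecutive edges may both be matched (it is the same edge $N(x)x$ traversed back and forth) or both be unmatched.

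In one direction, go from the base of $B_1$ along the $mm$-walk to the base of $B_2$, around $B_2$, back along the walk, and around $B_1$. This closed walk passes through every vertex of the posy, and re-rooting it at $v$ gives a walk of the required form. In the other direction, take such a walk through $v$ of minimal length. Its parity defect forces an odd closed sub-walk at each end that is alternating except at one vertex, and these sub-walks contain $N$-blossoms. The connecting part is then an $mm$-$N$walk between their bases. This is exactly what the Jposy definition permits, with no disjointness requirements, which is why I work with J-structures throughout.

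The second step is the transfer, and I expect it to be the main obstacle. Take the odd closed $M'$-walk $W'$ through $v$ from the first step. Each edge $e\in M'\setminus M$ of $W'$ lies on a unique cycle $C_e$ of $M\triangle M'$. I replace $e$ by the complementary arc $C_e-e$. This arc has even length, starts and ends with $M$-edges, and its interior alternates with respect to $M$. Its endpoints are the endpoints of $e$, so the neighbouring edges of $W'$ (which are not in $M'$, and, if they are not in $M$ either, stay unmatched) fit with it correctly.

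Edges of $W'$ not in $M'$ but in $M$ must also be handled. Such an edge lies on some cycle $C$, and I replace it by the arc of $C$ through which it becomes consistent. I need to check that each replacement changes the length by an even number, so the total length stays odd, and that the alternation is preserved at every junction. I would verify this by a local case analysis on whether the two walk-edges at a junction vertex belong to $M$, $M'$, or both. If $v$ itself lies on some $C_e$, it remains on the new walk, because either its incident walk-edge survives or the detour passes through it.

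The result is an odd closed $M$-alternating walk of the required form through $v$. By the first step it yields an $M$-Jposy containing $v$, which proves the theorem. If the local case analysis turns out to be messy, my fallback is to apply the same splicing to the structure directly: replace the $mm$-walk and each blossom separately, so that each blossom becomes an odd closed $M$-walk whose base has its $M$-edge leaving along the walk, and then re-extract $M$-blossoms from these by the minimality argument of the first step.
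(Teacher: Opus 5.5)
Your first step is wrong: the closed-walk reformulation of Jposies fails, and the transfer in your second step only carries that invariant from $M'$ to $M$. Your forward construction already falls short. The walk from $b_1$ along the $mm$-walk $P$ to $b_2$, around $B_2$, back along $P$ and around $B_1$ has length $2|P|+|B_1|+|B_2|$. That is even, and the walk alternates at every vertex, bases included. More seriously, the converse fails. Let $G$ be a triangle $bcd$ with a pendant edge $be$, and let $N=\{be,cd\}$. The walk $e,b,c,d,b,e$ is closed, has length $5$, begins and ends with the matched edge $eb$, and alternates in between. The triangle is also an odd closed walk whose only defect is an unmatched--unmatched junction at $b$. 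So both readings of your condition hold. Yet the only $N$-blossom is the triangle with base $b$. An $mm$-walk starting at $b$ must begin with $be$ and cannot continue, since $e$ is a leaf, so no vertex lies in an $N$-Jposy; indeed $\alpha(G)=\mu(G)=2$ and $|V(G)|=4$, so $G$ is König--Egerváry. Your minimality argument finds only one blossom, because a matched edge traversed back and forth is not a second one. A single-defect odd closed walk certifies only a ``stem plus one blossom'' configuration, and König--Egerváry graphs contain those.

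For a perfect matching $N$, $v$ lies in an $N$-Jposy exactly when $v$ has \emph{both} an odd closed $N$-alternating walk beginning and ending with $vN(v)$ \emph{and} one beginning and ending with unmatched edges at $v$ (cf.\ \cref{asdmasd891}). The Jposy is then obtained by joining the two resulting blossoms through $vN(v)$. This needs an extraction lemma you have not supplied. Write an odd closed $nn$-walk at $u$ as $u,z_1,N(z_1),\dots,z_{k-1},N(z_{k-1}),u$ with $k$ minimal. Either it is a blossom with base $u$, or cutting at a pair $z_j=N(z_i)$ with $i<j$ and $j-i$ minimal gives a blossom with base $z_j$, reached from $u$ by an $nm$-walk.

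With this invariant your splicing idea works, but not edge by edge. First, $C_e-e$ has odd length, not even. Second, if the edge preceding $e=xy$ on the walk lies in $M\setminus M'$, it is $xM(x)\in E(C_e)$, so your replacement produces that edge immediately followed by its reversal. Instead, replace each maximal segment of $M\triangle M'$-edges by the arc of $C$ joining its endpoints the other way round. Such a segment winds along a single cycle $C$ and is flanked by edges in neither matching, so interior segments start and end in $M'$ and have odd length. The replacement arc starts and ends in $M$ and also has odd length, so alternation and parity are preserved. You then need a separate check of the segments at the two ends of each walk, to confirm that the $mm$ or $nn$ type at $v$ carries over from $M'$ to $M$.
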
 

From \cref{420thmkl} the following observation follows immediately.

\begin{lemma}\label{asdmasd891}
	If $G$ is a Sterboul--Deming graph with a perfect matching
	$M$, then for every $v\in V(G)$ the following statements hold:
	\begin{itemize}
		\item There exists an $mm$-walk from $v$ to the base of an $M$-blossom
		in $G$.
		\item There exists an $nm$-walk from $v$ to the base of an $M$-blossom
		in $G$.
	\end{itemize}
\end{lemma}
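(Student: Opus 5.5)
Since $G$ is a Sterboul--Deming graph, every vertex $v$ lies in $SD(G)$. By \cref{420thmkl}, applied to the fixed perfect matching $M$, there is an $M$-Jposy $P$ of $G$ containing $v$. The plan is to read both required walks directly off the structure of $P$. Recall that $P$ consists of two $M$-blossoms $B_1,B_2$ with bases $b_1,b_2$, joined by an $mm$-$M$walk $W$ from $b_1$ to $b_2$. Because $M$ is perfect, every vertex of $P$ is matched, and inside each blossom the base is the unique vertex whose matched edge leaves the blossom; that matched edge is the first (and last) edge of $W$.

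The first case is $v$ on the stem $W$. Then $v=w_j$ for some index in $W=w_0w_1\cdots w_t$ with $w_0=b_1$ and $w_t=b_2$. Exactly one of the edges $w_{j-1}w_j$, $w_jw_{j+1}$ lies in $M$. Reading $W$ from $v$ toward $b_1$ gives an alternating walk ending with the matched edge $w_1w_0$, and similarly toward $b_2$. One of these two directions starts with the matched edge at $v$, which gives the $mm$-walk; the other starts with an unmatched edge, which gives the $nm$-walk. Both end at a blossom base. The degenerate case $v=b_i$ is covered by the next case, or handled directly: the empty walk serves as the trivial even walk if allowed, and otherwise one goes around the blossom.

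The second case is $v$ in a blossom $B_1$ with $v\neq b_1$. The odd cycle $B_1$ gives two paths from $v$ to $b_1$, one in each direction. Their lengths have opposite parity, and both are $M$-alternating, with the matched edges of $B_1$ covering all of its vertices except $b_1$. Along each path the edge entering $b_1$ is unmatched, since $b_1$'s matched edge is on $W$. I will append the matched edge $b_1w_1$ and continue along $W$ to $b_2$, arriving via the matched edge $w_{t-1}b_2$. The two directions around the cycle start at $v$ with the matched and the unmatched edge respectively, so they yield an $mm$-walk and an $nm$-walk ending at the base $b_2$ of the blossom $B_2$.

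The main obstacle is careful parity and endpoint bookkeeping. First, I must confirm that "ending at the base" with a final matched edge is the intended convention, since the base's matched edge points out of its blossom. This is consistent with the alternating walk continuing into $B_2$'s cycle. Second, I must handle the cases where $B_1$ and $B_2$ share vertices, or where $W$ revisits vertices. The Jposy definition allows both, and since only walks are required, no simplicity needs to be checked.
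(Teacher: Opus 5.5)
Your proposal is correct and is essentially the paper's argument: the paper only says the lemma follows immediately from \cref{420thmkl}, and you apply that theorem to the fixed perfect matching $M$ and then read both walks off the resulting $M$-Jposy, as intended. One small correction: the case $v=b_1$ is not ``covered by the next case''; instead, $W$ itself gives the $mm$-walk, and going once around $B_1$ and then along $W$ gives the $nm$-walk.
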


Let $G$ be a graph and let $D(G),A(G),C(G)$ be its Gallai--Edmonds
decomposition. Consider the nontrivial components
$D_{1},\dots,D_{k}$ of $G[D(G)]$. We now define the following reduction of $G$:
first, remove from $G$ all the vertices belonging to the nontrivial
components of $G[D(G)]$, that is, remove the vertices in
$D_{1},\dots,D_{k}$. Then add $k$ disjoint triangles
$T_{1},\dots,T_{k}$ (copies of $K_3$), one for each $D_i$.
For each triangle $T_i$, select one of its three new vertices and
denote it by $t_i$, for $i=1,\dots,k$.
Next, add edges of the form $t_i x_i$, where
$x_i\in N_{G}(D_i)\cap A(G)$, for every $i=1,\dots,k$.
The resulting graph is called the reduced form of $G$ and is denoted
by $\mathcal{R}(G)$. 
In short, $\mathcal{R}(G)$ is obtained from $G$ by collapsing each component
$D_{1},\dots,D_{k}$ into a triangle and transferring the original
connections to a single vertex in each triangle (see \cref{formareducida}).

\begin{figure}[h!]
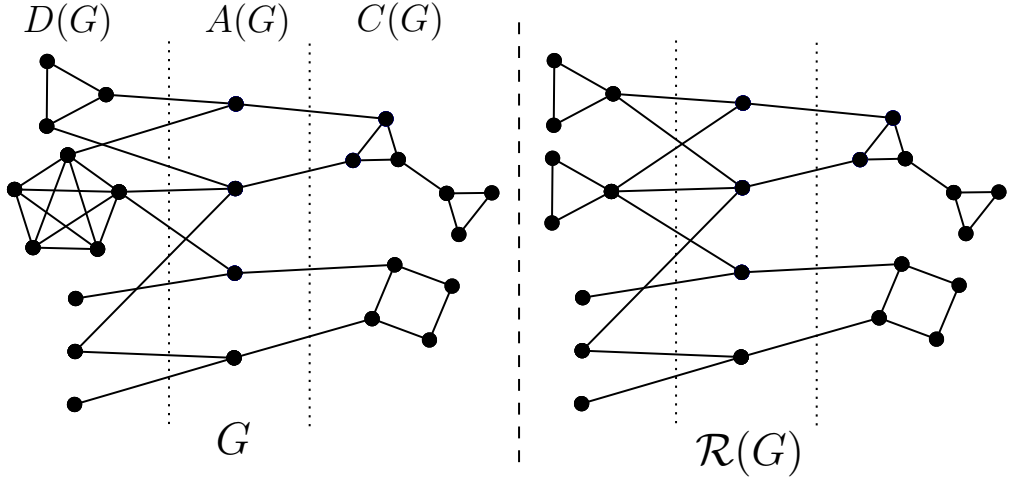

	
	\begin{center}

		\tikzset{every picture/.style={line width=0.75pt}} 
		


	\end{center}
	\caption{Illustration of the reduced form $\mathcal{R}(G)$ of $G$
	}
	\label{formareducida}
	
\end{figure}

By the Gallai--Edmonds decomposition theorem, we have the following observation.

\begin{observation}
	For every graph $G$, $A(G)=A(\mathcal{R}(G))$ and
	$C(G)=C(\mathcal{R}(G))$. In other words, $G$ and $\mathcal{R}(G)$
	have almost the same Gallai--Edmonds decomposition.
\end{observation}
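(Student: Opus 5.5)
The plan is to exhibit one maximum matching $M_R$ of $\mathcal{R}(G)$ and then read off the Gallai--Edmonds sets directly from the definitions in \cref{ge}. Matchings of $G$ and of $\mathcal{R}(G)$ will be compared through the structure that \cref{ge} imposes.

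First fix a maximum matching $M$ of $G$. By \cref{ge}, $M$ covers $C(G)$ and matches $A(G)$ into distinct components of $G[D(G)]$. It is near-perfect on each $D_i$, and it leaves the trivial (singleton) components of $D(G)$ either unmatched or matched to $A(G)$.

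Next build $M_R$ on $\mathcal{R}(G)$ as follows:
\begin{itemize}
\item keep every edge of $M$ inside $C(G)$, and every edge between $A(G)$ and the trivial components of $D(G)$;
\item if $a\in A(G)$ is matched by $M$ into $D_i$, replace that edge by $a t_i$, which is an edge of $\mathcal{R}(G)$ by construction, and match the other two vertices of $T_i$ to each other;
\item if $D_i$ receives no $A$-edge, match the two vertices of $T_i$ other than $t_i$, leaving $t_i$ exposed.
\end{itemize}
Then $M_R$ has the same deficiency as $M$. To prove $M_R$ is maximum in $\mathcal{R}(G)$, use the Tutte--Berge formula with the barrier $S=A(G)$. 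In $\mathcal{R}(G)-A(G)$ the odd components are the triangles $T_i$ and the singletons of $D(G)$, and $C(G)$ forms even components with perfect matchings. The count of odd components equals that in $G-A(G)$, so the deficiency bound $\textnormal{odd}-|A|$ matches and $M_R$ is maximum.

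Then identify $D(\mathcal{R}(G))$. Every vertex of each $T_i$ can be missed by some maximum matching: if $t_i$ is matched to $a$, re-route along an $M_R$-alternating path inside the triangle, or use the Gallai--Edmonds property in $G$ that some maximum matching misses a vertex of $D_i$ and leaves $D_i$ unmatched to $A$, which translates to $t_i$ being exposed. Trivial components of $D(G)$ stay in $D$ by the same transfer. Conversely, vertices of $C(G)$ and $A(G)$ are covered by every maximum matching of $\mathcal{R}(G)$. This follows because $A(G)$ is a barrier attaining Tutte--Berge, so each maximum matching must match $A(G)$ into distinct odd components and perfectly match the even components, which are exactly $C(G)$. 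Hence $D(\mathcal{R}(G))=(D(G)\setminus\bigcup D_i)\cup\bigcup V(T_i)$.

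Finally, $A(\mathcal{R}(G))$ is the set of neighbours of this $D$ outside it. Since $N_G(D_i)\cap A(G)=N_{\mathcal{R}(G)}(t_i)\setminus V(T_i)$, and $C(G)$ has no neighbours in $D(G)$, this set equals $A(G)$. Then $C$ agrees as the complement. The main obstacle is the third step, namely showing that each vertex of $T_i$, and each trivial $D$-vertex, is missed by some maximum matching of $\mathcal{R}(G)$. I would handle it by transporting, for each such vertex, a maximum matching of $G$ that misses a vertex of the corresponding $D_i$ and leaves $D_i$ unmatched to $A(G)$, which exists because $D_i\subseteq D(G)$ and by \cref{ge}(1). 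I would then apply the construction above, choosing which triangle vertex is exposed.
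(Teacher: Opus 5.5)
Your proof is correct. The paper gives no argument beyond saying the observation follows from the Gallai--Edmonds theorem, so your write-up supplies the verification the paper omits. You transport maximum matchings of $G$ to $\mathcal{R}(G)$. You then use $A(G)$ as a Tutte--Berge barrier in $\mathcal{R}(G)$: its odd components are the $T_i$ and the singletons of $D(G)$, and its even components are those of $G[C(G)]$. The barrier does two jobs: it certifies maximality, and it forces every maximum matching of $\mathcal{R}(G)$ to cover $A(G)\cup C(G)$. Finally you read off $D$, $A$, $C$ from the definitions.

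A shorter alternative is the converse (uniqueness) form of Gallai--Edmonds. If every odd component of $G-S$ is factor-critical, every even component has a perfect matching, and every nonempty $X\subseteq S$ has neighbours in at least $|X|+1$ odd components, then $S=A(G)$, and the odd and even components make up $D(G)$ and $C(G)$. Take $S=A(G)$ in $\mathcal{R}(G)$. Triangles and singletons are factor-critical. Item~3 of \cref{ge} transfers, because a set $X\subseteq A(G)$ has a neighbour in $D_i$ in $G$ exactly when it is adjacent to $t_i$ in $\mathcal{R}(G)$. So that route is a one-liner, but it relies on a converse that \cref{ge} as stated does not contain. Your route needs only the forward theorem and Tutte--Berge.

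One small correction in your third step. The first option you mention, re-routing inside $T_i$ when $t_i$ is matched to some $a\in A(G)$, cannot work on its own: in any maximum matching containing $at_i$, the other two vertices of $T_i$ must be matched to each other. Your fallback is the right argument. A maximum matching of $G$ missing a vertex of $D_i$ sends no edge from $A(G)$ into $D_i$, by near-perfectness on $D_i$. Its transport therefore leaves $t_i$ exposed. Only then does swapping the edge inside $T_i$ expose either of the other two vertices.
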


\begin{lemma}[\label{kev912de}\cite{kevinsekege}]
	Every vertex belonging to a nontrivial component
	of $G[D(G)]$ lies in $SD(G)$.
\end{lemma}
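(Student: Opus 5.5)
We must prove that if $D_i$ is a nontrivial component of $G[D(G)]$, then every vertex $v\in D_i$ lies in an $M$-Jflower for some maximum matching $M$. By \cref{12ews} this puts $v$ in $SD(G)$.

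First we choose the matching. By \cref{ge}, $D_i$ is factor-critical and every maximum matching restricts to a near-perfect matching of $D_i$. Moreover, there is a maximum matching $M$ of $G$ that leaves some vertex of $D_i$ exposed in $G$. Indeed, pick $u\in D_i$ and a maximum matching $M_0$ missing $u$. By item 1 of \cref{ge} no vertex of $A(G)$ is matched into $D_i$ by $M_0$, since $u\in D_i$ is exposed and $D_i$ is saturated internally except for one vertex. So $u$ is the unique exposed vertex of $D_i$. Because $D_i$ is factor-critical, for any $w\in D_i$ we can replace $M_0\cap E(D_i)$ by a perfect matching of $D_i-w$. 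This gives a maximum matching $M_w$ of $G$, of the same size, that misses exactly $w$ in $D_i$ and is exposed there.

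Next we build the flower. Fix $v\in D_i$. Since $D_i$ is nontrivial and factor-critical, it is 2-edge-connected and in particular not bipartite, so it contains an odd cycle. Take an ear decomposition of $D_i$ (Lovász) starting from an odd cycle $C$. Choose a base vertex $w$ on $C$ and take the matching $M_w$ chosen so that it restricts to a near-perfect matching of $C$ with exposed vertex $w$, extended along the ears. This is the standard construction: in an odd ear decomposition of a factor-critical graph, each ear can be matched internally. Then $C$ is an $M_w$-blossom with base $w$, and $w$ is exposed, so $C$ with the stem of length zero is an $M_w$-Tflower. If $v\in C$ we are done.

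Otherwise $v$ lies on some later ear, and we must reach it by an even alternating walk from the exposed vertex $w$. For this we use the factor-critical property differently. Take the matching $M_v$ that leaves $v$ itself exposed. Since $D_i$ is non-bipartite and connected, an alternating search from $v$ in $D_i$ (Edmonds' algorithm) must produce a blossom; otherwise $D_i$ would admit the bipartite Hungarian-tree structure with $v$ as the exposed root. The factor-criticality then forces the odd-level vertices to be covered, contradicting that every vertex is in $D(D_i)$. Concretely, let $u$ be a neighbour of $v$ in $D_i$. By the argument in the first step, $M_v(u)=u'$ exists, and since $u'\in D(G)$ there is an even alternating path from $v$ to $u'$ switching matchings. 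Following the $M_v$-alternating tree rooted at $v$ inside $D_i$, we find an edge joining two even vertices, which yields a blossom $B$ with base $b$ and an even $mm$-alternating stem from $b$ to the root $v$. This is an $M_v$-Tflower whose root $v$ is contained in it.

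The main obstacle is this last step: guaranteeing that the alternating tree from $v$ inside $D_i$ closes an odd cycle rather than terminating. I would argue it by contradiction. If no two even vertices of the tree are adjacent, then the odd vertices $S$ form a set with $D_i-S$ having the even vertices as isolated components, $|S|$ of them plus $v$. That gives $\mathrm{odd}(D_i-S)>|S|$, and for $|S|\ge1$ it contradicts factor-criticality, because removing a vertex of $S$ must leave a perfect matching. If $S=\emptyset$, then $v$ is isolated in $D_i$, which contradicts nontriviality.
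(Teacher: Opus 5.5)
The paper does not prove this lemma; it imports it from \cite{kevinsekege}. There is therefore no in-paper argument to compare against, and your proof has to stand on its own. It does. The load-bearing part is the alternating-tree construction together with your last paragraph.

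\begin{itemize}
\item Let $M_v$ be a maximum matching of $G$ exposing $v$. Its restriction to $D_i$ is a perfect matching of $D_i-v$, so the fully grown $M_v$-alternating tree in $D_i$ rooted at $v$ never reaches an exposed vertex.
\item If no two even vertices were adjacent, the $|S|+1$ even vertices would have all their $D_i$-neighbours in the odd set $S$. For $S\neq\emptyset$ this rules out a perfect matching of $D_i-s$ with $s\in S$. For $S=\emptyset$ it makes $v$ isolated in $D_i$.
\item An even--even edge therefore exists. It closes a blossom at the (even) lowest common ancestor $b$, and the tree path from $v$ to $b$ is a stem meeting the blossom only at $b$.
\end{itemize}
So $v$, as the root, lies in an $M_v$-Tflower, and you do not even need \cref{12ews}.

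Much of the rest is scaffolding you can drop.
\begin{itemize}
\item The ear-decomposition paragraph plays no role. Also, its first ear must be the one Lov\'asz's theorem provides, not an arbitrary odd cycle: in $K_5$ minus an edge $pq$, the triangle on the other three vertices starts no odd ear decomposition, because $\{p,q\}$ has no perfect matching.
\item The sentence about $u'=M_v(u)$ is muddled. The $v$--$u'$ path in $M_v\triangle M_{u'}$ ends with the edge $uu'$, so closing it with $uv$ produces an even cycle, not a blossom.
\end{itemize}

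Use $u$ itself instead. The matchings $M_v$ and $M_u$ from your first step agree outside $D_i$. Hence the component of $M_v\triangle M_u$ containing $v$ is an even $M_v$-alternating path $P$ from $v$ to $u$, of length at least $2$, whose last edge is $u'u$. Then $P+uv$ is an $M_v$-blossom with exposed base $v$, that is, an $M_v$-Tflower with trivial stem containing $v$. This two-line symmetric-difference argument, in the spirit of \cref{matchisns12agus}, replaces both the alternating-tree search and the Tutte-type count.
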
 

By considering the symmetric difference of two maximum matchings, the following well-known fact is easily obtained.

\begin{lemma}\label{matchisns12agus}
	For every maximum matching $M$ of a graph $G$
	and every vertex $v\in D(G)$, there exists an $mn$-path from $v$
	to an unsaturated vertex of $M$, possibly trivial.
\end{lemma}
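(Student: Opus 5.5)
We need to prove \cref{matchisns12agus}: for every maximum matching $M$ and every $v\in D(G)$ there is an $mn$-path (alternating path whose first edge is in $M$ and last edge is not in $M$), possibly trivial, from $v$ to an $M$-unsaturated vertex.

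\textbf{Trivial case.} If $v$ is not saturated by $M$, the trivial path at $v$ works.

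\textbf{Nontrivial case.} Assume $M(v)\neq v$. Since $v\in D(G)$, fix a maximum matching $N$ that misses $v$ and form $H=M\triangle N$. Every vertex has degree at most $2$ in $H$, so each component of $H$ is a path or an even cycle whose edges alternate between $M$ and $N$.

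In $H$ the vertex $v$ is covered by $M$ but not by $N$. Hence $\deg_H(v)=1$, with the unique incident edge $vM(v)\in M$. So $v$ is an endpoint of a path component $P$ of $H$, and $P$ starts with an $M$-edge.

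Let $w$ be the other endpoint of $P$. Two cases arise from the edge of $P$ at $w$.

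\begin{itemize}
\item Suppose the last edge of $P$ is in $M$. Then $P$ has both end edges in $M$, so it contains one more $M$-edge than $N$-edges. Its endpoints $v$ and $w$ are $N$-unsaturated, since each is an endpoint of $P$ whose only $H$-edge lies in $M$. Thus $P$ is $N$-augmenting, and $N\triangle E(P)$ is a matching larger than $N$. This contradicts the maximality of $N$.
\item Hence the last edge of $P$ is in $N\setminus M$. Then $w$ is incident to an $N$-edge of $P$ and to no $M$-edge of $H$. Suppose $w$ were $M$-saturated by an edge $e$. If $e\notin N$, then $e\in H$, giving $w$ degree $2$ in $H$, which contradicts $w$ being an endpoint of $P$. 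If $e\in N$, then $w$ carries two distinct $N$-edges ($e$ and the last edge of $P$), which is impossible in a matching. So $w$ is $M$-unsaturated.
\end{itemize}

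\textbf{Conclusion.} $P$ is $M$-alternating, starts at $v$ with an $M$-edge, and ends at the $M$-unsaturated vertex $w$ with a non-$M$ edge. So $P$ is the required $mn$-path.

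The only delicate point is ruling out that $P$ ends with an $M$-edge, and this is exactly where the maximality of $N$ is used. Since $H$ is a disjoint union of simple paths and cycles, $P$ is automatically a path, not merely a walk.
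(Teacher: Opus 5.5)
Your proof is correct and is exactly the symmetric-difference argument the paper has in mind; the paper gives no details beyond saying the fact follows from the symmetric difference of two maximum matchings. You take the path component of $M\triangle N$ starting at $v$ with an $M$-edge, use the maximality of $N$ to rule out its ending with an $M$-edge, and conclude that it ends at an $M$-unsaturated vertex. The only tersely justified step is that $w$ is $N$-unsaturated in your first bullet; it follows by the same two-case argument you give explicitly in the second bullet.
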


A maximum matching $M$ of $G$ naturally induces a maximum matching
$\mathcal{R}(M)$ in $\mathcal{R}(G)$ as follows:
consider the set of edges
$M^{\P}:=M\cap E(\mathcal{R}(M))$, that is, the edges of $M$ having no
endpoints in any nontrivial component of $G[D(G)]$.
For each removed edge of the form $ad\in M$, with
$a\in A(G)$ and $d\in D(G)$ belonging to a nontrivial component,
we add to $M^{\P}$ the edge $at$, where $t$ is the distinguished vertex
in $\mathcal{R}(G)$ representing the component of $G[D(G)]$ containing $d$.

Taking into account \cref{kev912de}, the following result shows that
the graph $\mathcal{R}(G)$ preserves all the information of the
SD--KE decomposition of $G$.

\begin{theorem}\label{ilustr12a}
	For every graph $G$, we have $KE(G)=KE(\mathcal{R}(G))$.
\end{theorem}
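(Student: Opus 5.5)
We show that the two sets $KE(G)$ and $KE(\mathcal{R}(G))$, both subsets of $V(G)\cap V(\mathcal{R}(G))$, coincide by proving the equivalent statement for $SD$. Note the vertex sets of $G$ and $\mathcal{R}(G)$ share exactly $A(G)\cup C(G)$ together with the trivial components of $G[D(G)]$.

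The first step handles the vertices that are not shared. Every vertex of a nontrivial component $D_i$ lies in $SD(G)$ by \cref{kev912de}. In $\mathcal{R}(G)$, the triangle $T_i$ is a nontrivial component of $G[D(\mathcal{R}(G))]$: the Observation gives the same $A$ and $C$, and $T_i$ is factor-critical. Hence all three vertices of $T_i$ lie in $SD(\mathcal{R}(G))$, again by \cref{kev912de}. So neither $KE$ set meets the collapsed parts, and it suffices to show, for every common vertex $v$, that $v\in SD(G)$ if and only if $v\in SD(\mathcal{R}(G))$.

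For this we use \cref{12ews} and work with Jposies and Jflowers relative to the maximum matchings $M$ and $\mathcal{R}(M)$. We also note that every maximum matching of $\mathcal{R}(G)$ arises as $\mathcal{R}(M)$: by the Gallai--Edmonds theorem any maximum matching matches $A$ into distinct $D$-components, and a near-perfect matching of $D_i$ can be chosen to miss any prescribed vertex, by factor-criticality. The core is a translation of $M$-alternating walks. An $M$-alternating walk in $G$ enters a component $D_i$ either along a matched edge $ad$ or along an unmatched edge. Each maximal segment inside $D_i$ is to be replaced by a segment inside $T_i$ with the same parity behaviour at its ends.

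The main obstacle is performing this replacement while preserving alternation and the endpoint types of the walk.
\begin{itemize}
\item If the walk enters and leaves $D_i$ through $A$, the internal segment is an alternating walk in $D_i$ between two boundary vertices. Since $D_i$ is factor-critical, we can choose (after rematching inside $D_i$) such a walk of either required parity. Dually, in $T_i$ we can route through $t_i$ and the triangle's other two vertices, matched to each other, to realize the needed parity.
\item A segment that terminates inside $D_i$ is handled differently. This covers a blossom contained in $D_i$, or a stem ending at the exposed vertex of $D_i$. Here we replace it by the triangle $T_i$, which is itself an $\mathcal{R}(M)$-blossom whose base can be made $t_i$ or the exposed vertex. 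Conversely, a blossom $T_i$ lifts to an $M$-blossom in $D_i$, which exists because $D_i$ is factor-critical and non-bipartite, combined with \cref{matchisns12agus} to reach the base or an exposed vertex by an $mn$-path.
\end{itemize}
Walks are allowed, so repeated vertices created by the substitutions are harmless; this is exactly why \cref{12ews} is used rather than Tposies. Applying the translation in both directions shows that a common vertex $v$ lies on an $M$-Jposy or $M$-Jflower in $G$ iff it lies on an $\mathcal{R}(M)$-Jposy or $\mathcal{R}(M)$-Jflower in $\mathcal{R}(G)$. This gives $SD$ equality on common vertices and hence $KE(G)=KE(\mathcal{R}(G))$.
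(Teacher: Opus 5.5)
\textbf{There is a genuine gap in the direction from $G$ to $\mathcal{R}(G)$.} Your reduction to the common vertices via \cref{kev912de} (applied to $G$ and to $\mathcal{R}(G)$) is the paper's first step. Local substitution is also adequate from $\mathcal{R}(G)$ to $G$: passages through $T_i$ always use the matched edge at $t_i$, and a factor-critical $D_i$ can simulate that.

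The problem is the claim in your first bullet that inside $T_i$ one can route through $t_i$ and the other two triangle vertices ``to realize the needed parity''. This is false. In any matching of $\mathcal{R}(G)$, $t_i$ meets at most one matched edge, and the other two vertices of $T_i$ are adjacent only to each other and to $t_i$. So a walk that arrives at $t_i$ along an unmatched edge must leave at once along the matched edge at $t_i$. If $t_ia''$ is matched, the walk is forced to $a''$; otherwise it dies in the triangle.

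In $G$, by contrast, an $M$-alternating walk can do two things that $T_i$ cannot reproduce:
\begin{itemize}
\item It can cross $D_i$ through two unmatched edges. Take $D_i$ a $5$-cycle $d_1\cdots d_5$ with $d_1a'',d_2d_3,d_4d_5\in M$, and the walk $a,d_2,d_3,a'$.
\item It can enter $D_i$ through an unmatched edge and end at a blossom inside $D_i$, e.g.\ $a,d_2,d_3$ followed by the blossom $d_3d_4d_5$ when $d_3d_5\in E(G)$. In $\mathcal{R}(G)$, $T_i$ is then a blossom based at $t_i$, but it is reached by an unmatched edge, so no $mm$-walk ends there.
\end{itemize}
Such segments occur in genuine Tposies. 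For example, take the posy $b_1,a,d_2,d_3,a',b_2$, where $b_1=M(a)$ and $b_2=M(a')$ are bases of triangle components of $G[D(G)]$. No walk in $\mathcal{R}(G)$ reproduces these segments with the same alternation and end types. Your plan of preserving alternation and endpoint types therefore cannot be carried out.

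\textbf{The missing idea is that the configuration must be allowed to change type, from a posy to a flower.} The paper does this instead of translating whole walks. For a common vertex $x$ on an $M$-posy, it follows the $mm$-walk from $x$, and separately from $M(x)$, only until the walk first meets a nontrivial component $D_i$.
\begin{itemize}
\item If the walk enters $D_i$ through the matched edge, it stops there, with $T_i$ as a blossom based at $t_i$. This is condition (1).
\item If it enters through an unmatched edge $ad$, the proof uses \cref{matchisns12agus} to continue from $d$ to an $M$-exposed vertex. In $\mathcal{R}(G)$ this reads $a,t_i,a'',\dots$, giving an $mn$-walk from $x$ to an exposed vertex. This is condition (2).
\end{itemize}
The walks for $x$ and $M(x)$ are then glued along $xM(x)$. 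Since (2) cannot hold for both, the result is an $\mathcal{R}(M)$-Jposy or an $\mathcal{R}(M)$-Jflower. Flowers need no rerouting, because a stem enters each $D$-component only through its unique matched edge from $A(G)$.

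A minor point: not every maximum matching of $\mathcal{R}(G)$ is literally of the form $\mathcal{R}(M)$, since a triangle not matched to $A(G)$ can be rotated internally. This is harmless.
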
 

\begin{proof}
	To prove that $KE(G)=KE(\mathcal{R}(G))$, by \cref{kev912de},
	it is enough to show that every vertex in $SD(G)$ not lying in a nontrivial
	component of $G[D(G)]$ belongs to $SD(\mathcal{R}(G))$,
	and conversely, that every vertex in $SD(\mathcal{R}(G))$ not lying
	in a nontrivial component of $\mathcal{R}(G)[D(\mathcal{R}(G))]$
	belongs to $SD(G)$. 
	
	Let $x\in SD(G)$ be a vertex not contained in any nontrivial component
	of $G[D(G)]$. 
	
	\textbf{Case 1.} Suppose that $x$ lies in an $M$-Eflower for
	some maximum matching $M$ of $G$. Let the blossom of the flower
	be $B=b_{1},\dots,b_{n}$ with $b_{1}=b_{n}$, and let the stem be
	$P=p_{1},\dots,p_{k}$ with $p_{k}=b_{1}$. It is easy to see that every vertex of
	the blossom belongs to $D(G)$, hence $x$ lies in the stem of the flower. 
	
	\textbf{Case 1.1.} First, suppose that $x=p_{1}$.
	Then $p_{1}$ is an isolated vertex of $G[D(G)]$.
	Consequently, $p_{2}\in A(G)$ and $p_{3}\in D(G)$.
	If $p_{3}$ lies in a nontrivial component of $G[D(G)]$,
	it is easy to see that there exists an $nm$--$\mathcal{R}(M)$ path
	in $\mathcal{R}(G)$ between $x$ and the base of an
	$\mathcal{R}(M)$-blossom. If $p_{3}$ is an isolated vertex of
	$G[D(G)]$, then $p_{4}\in A(G)$ and $p_{5}\in D(G)$.
	We repeat the same argument for $p_{5}$ as we did for $p_{3}$.
	In this way, at some step we obtain a vertex $p_i$ with $i$
	odd such that $p_i$ belongs to a nontrivial component of $G[D(G)]$,
	since the base of the blossom satisfies this property.
	Therefore, there exists an $nm$--$\mathcal{R}(M)$ path
	in $\mathcal{R}(G)$ between $x$ and the base of an
	$\mathcal{R}(M)$-blossom.
	Since $x$ is unsaturated by $\mathcal{R}(M)$, we have that
	$x$ lies in an $\mathcal{R}(M)$-Eflower of $\mathcal{R}(G)$,
	and hence $x\in SD(\mathcal{R}(G))$. 
	
	Now suppose that $x\neq p_{1}$.
	Without loss of generality, assume that $x$ is at an odd distance
	from the base of the blossom; then $M(x)$ is at an even distance
	from the base of the blossom (see \cref{Figurasa1x}).
	We will prove that both $x$ and $M(x)$ belong to $SD(\mathcal{R}(G))$.

	\begin{figure}[h!]
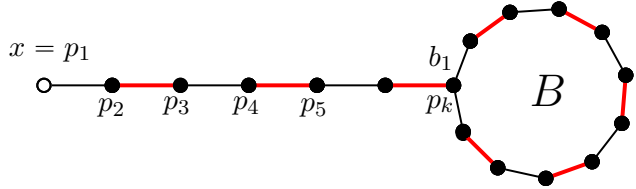

		
		\begin{center}

			\tikzset{every picture/.style={line width=0.75pt}} 
			


		\end{center}
		\caption{Illustration of the proof of \cref{ilustr12a}
		}
		\label{Figurasa1x}
		
	\end{figure}
	
\textbf{Case 1.2.} Then $M(x)\in D(G)$ and $x\in A(G)$. 
Analogously to the reasoning in Case~1.1, we can prove that there exists an 
$mm$--$\mathcal{R}(M)$ path in $\mathcal{R}(G)$ between $x$ and the base 
of an $\mathcal{R}(M)$-blossom. 

We now perform a similar reasoning moving to the left of $x$. 
Suppose that $x=p_i$ for some $i$. Consider the following cases:

\textbf{Case 1.3.} If $p_{i-1}\in A(G)$, then $p_{i-2}\in D(G)$.
If moreover $p_{i-2}$ belongs to a nontrivial component of $G[D(G)]$,
then there exists an $nm$--$\mathcal{R}(M)$ path in $\mathcal{R}(G)$
between $x$ and the base of an $\mathcal{R}(M)$-blossom. Hence $x$
lies in an $\mathcal{R}(M)$-Jposy of $\mathcal{R}(G)$.
Otherwise, suppose that $p_{i-2}$ is an isolated vertex of $G[D(G)]$; 
then $p_{i-3}\in A(G)$ and $p_{i-4}\in D(G)$.
Repeating the argument with $p_{i-4}$ instead of $p_{i-2}$, we eventually
obtain that $x$ belongs to an $\mathcal{R}(M)$-Jposy of $\mathcal{R}(G)$,
or possibly to an $\mathcal{R}(M)$-Jflower of $\mathcal{R}(G)$ when
this process terminates at $p_1\in D(G)$, using \cref{asdmasd891}. 

\textbf{Case 1.4.} If $p_{i-1}\in D(G)$ and moreover $p_{i-1}$ lies
in a nontrivial component of $G[D(G)]$, then by \cref{matchisns12agus}
we obtain that $x$ lies in an $\mathcal{R}(M)$-Jflower of $\mathcal{R}(G)$.
If $p_{i-1}$ is an isolated vertex of $G[D(G)]$, then $p_{i-2}\in A(G)$.
If also $p_{i-3}\in A(G)$ we repeat Case~1.3; 
if $p_{i-3}\in D(G)$ we repeat the current case.

\medskip

\textbf{Case 2.} Suppose that $x$ lies in an $M$-Tposy for some
maximum matching $M$ of $G$. Then there exists (up to a possible
rotation of $M$) an $mm$--$M$ path $P=p_{1},\dots,p_{k}$ in
$G$ connecting $x=p_{1}$ to the base of an $M$-blossom whose
intersection is only the base, say $B=p_{k},p_{k+1},\dots,(p_{n}=p_{k})$.
We will show that 
\begin{enumerate}
	\item there exists an $mm$--$\mathcal{R}(M)$ path in $\mathcal{R}(G)$
	between $x$ and the base of an $\mathcal{R}(M)$-blossom, or 
	\item there exists an $mn$-walk from $x$ to an unsaturated vertex 
	of $\mathcal{R}(M)$ in $\mathcal{R}(G)$. 
\end{enumerate}
The same holds for $M(x)$, but the second condition cannot occur
simultaneously for $x$ and $M(x)$. Therefore, both $x$ and $M(x)$
belong to an $\mathcal{R}(M)$-Jposy or an $\mathcal{R}(M)$-Eflower of 
$\mathcal{R}(G)$, and thus $x,M(x)\in SD(\mathcal{R}(G))$. 

Consider the following three subcases.

\textbf{Case 2.1.} Suppose that $x\in C(G)$. Then $p_{2}\in C(G)$.
If $p_{3}\in C(G)$, we repeat this case; otherwise, suppose that
$p_{3}\in A(G)$, then $p_{4}\in D(G)$. If $p_{4}$ lies in a nontrivial
component of $G[D(G)]$, we are done. Otherwise, suppose that $p_{4}$
is an isolated vertex of $G[D(G)]$. Then $p_{5}\in A(G)$ and $p_{6}\in D(G)$.
Repeating this argument, we either finish the case or obtain that no vertex 
of $V(B)\cup V(P)$ lies in a nontrivial component of $G[D(G)]$, 
which also completes the proof of this case. Here $x$ (and analogously $M(x)$)
satisfies condition (1).

\medskip

\textbf{Case 2.2.} Suppose that $x\in A(G)$; then $p_{2}\in D(G)$,
and we repeat the argument of Case~2.1. Here $x$ satisfies (1), 
and $M(x)$ is analyzed as in Case~2.3. 

\medskip

\textbf{Case 2.3.} Suppose that $x\in D(G)$; then $p_{2}\in A(G)$.
By Case~2.2, $M(x)$ satisfies (1). If $p_{3}\in C(G)$, we repeat
Case~2.1 and prove that $x$ satisfies (1). If $p_{3}\in A(G)$, we
repeat this case. Suppose that $p_{3}\in D(G)$. If $p_{3}$ lies in a
nontrivial component of $G[D(G)]$, then by \cref{matchisns12agus} 
$x$ satisfies (2), and we are done. Otherwise, suppose that $p_{3}$
lies in a trivial component of $G[D(G)]$; then $p_{4}\in A(G)$.
Repeating this argument, we either finish the case or find that no vertex
of $V(B)\cup V(P)$ lies in a nontrivial component of $G[D(G)]$, 
which also completes the case. 

The proof that a vertex $x\in SD(\mathcal{R}(G))$ not belonging to
a nontrivial component of $\mathcal{R}(G)[D(\mathcal{R}(G))]$
also lies in $SD(G)$ is essentially the same as above.
\end{proof}

As a corollary, we obtain the following characterization of
Sterboul--Deming graphs.

\begin{theorem}
A graph $G$ is a Sterboul--Deming graph if and
only if $\mathcal{R}(G)$ is a Sterboul--Deming graph.
\end{theorem}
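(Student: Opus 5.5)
The plan is to derive the statement directly from \cref{ilustr12a}, which gives $KE(G)=KE(\mathcal{R}(G))$ for every graph $G$. The subtle point is that the two graphs do not have the same vertex set, so I will check that the identity is really an equality of subsets of the common vertex set $V(G)\cap V(\mathcal{R}(G))$. That common set consists of $A(G)$, $C(G)$, and the isolated vertices of $G[D(G)]$.

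First I will justify that neither $KE$ set meets the part where $G$ and $\mathcal{R}(G)$ differ. For $G$, \cref{kev912de} says every vertex of a nontrivial component of $G[D(G)]$ lies in $SD(G)$. So $KE(G)$ is disjoint from $D_{1}\cup\dots\cup D_{k}$. For $\mathcal{R}(G)$, each added triangle $T_i$ has odd order and is factor-critical. Since $A(\mathcal{R}(G))=A(G)$ and $C(\mathcal{R}(G))=C(G)$ by the observation above, the vertices of $T_i$ are exactly the vertices of the nontrivial components of $\mathcal{R}(G)[D(\mathcal{R}(G))]$. Applying \cref{kev912de} to $\mathcal{R}(G)$ then gives $V(T_i)\subseteq SD(\mathcal{R}(G))$. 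Hence both $KE(G)$ and $KE(\mathcal{R}(G))$ are contained in the common vertex set, and \cref{ilustr12a} compares them there as genuine sets.

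Given this, the equivalence is immediate. $G$ is Sterboul--Deming iff $KE(G)=\emptyset$. By \cref{ilustr12a} this holds iff $KE(\mathcal{R}(G))=\emptyset$, that is, iff $\mathcal{R}(G)$ is Sterboul--Deming.

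The only step needing care is confirming that the triangles are precisely the nontrivial $D$-components of $\mathcal{R}(G)$. I would argue this directly from the Gallai--Edmonds structure. In $\mathcal{R}(G)$, the matching $\mathcal{R}(M)$ induced by a maximum matching $M$ of $G$ is maximum. Each triangle receives at most one matching edge from $A$, through its distinguished vertex $t_i$, so every vertex of $T_i$ can be left exposed by some maximum matching, obtained by rotating within the triangle. Thus $V(T_i)\subseteq D(\mathcal{R}(G))$. Since the triangles are connected and joined to the rest only through $A$, they form exactly the nontrivial components. The main risk is a degenerate case where $x_i$ is unmatched into $D_i$, but the same rotation argument still applies.
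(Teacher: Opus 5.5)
Your proposal is correct and follows the paper's route exactly: the paper states this theorem as an immediate corollary of \cref{ilustr12a}, since $KE(G)=\emptyset$ if and only if $KE(\mathcal{R}(G))=\emptyset$. Your vertex-set bookkeeping is harmless but unnecessary, because \cref{ilustr12a} already asserts a set equality and its proof begins with the same reduction via \cref{kev912de}; also, your direct ``rotation'' argument only works when you start from a maximum matching in which $t_i$ is not matched into $A(G)$ (one exists because the vertices of $D_i$ lie in $D(G)$), since rotating inside a triangle whose $t_i$ is matched to $A(G)$ loses an edge.
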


For a positive integer $k$, a regular spanning subgraph in which
every vertex has degree $k$ is called a \emph{$k$-factor}.
Following the notation of \cite{akiyama2011factors}, we say that a
graph $G$ is a $\{C_{n}:n=3,5,\dots\}$-factor if $G$ has a $2$-factor
$F$ in which each cycle of $F$ is odd. In \cite{abreu2014odd},
odd $2$-factored graphs were studied; these are graphs in which
every $2$-factor consists solely of odd cycles, and a constructive
method for generating such graphs was proposed.
Moreover, partial characterizations were obtained,
providing conditions to determine when a cyclically
$4$-edge-connected graph is an odd $2$-factored graph.
Here we show that every graph with a 
$\{C_{n}:n=3,5,\dots\}$-factor, and in particular every odd
$2$-factored graph, is a Sterboul--Deming graph
(see \cref{desc-cicos-impares}). 
This result follows from the more general statement given in \cref{thm410ll}.

\begin{figure}[h!]
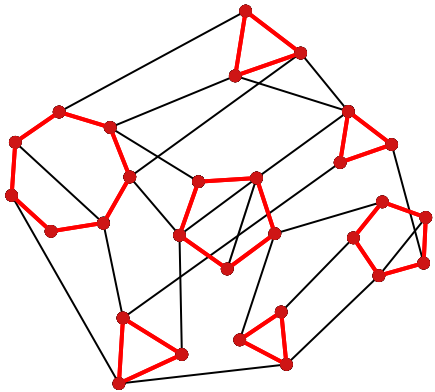

	
	\begin{center}

		\tikzset{every picture/.style={line width=0.75pt}} 
		


	\end{center}
	\caption{Example of a graph $G$ with a $\{C_{n}:n\textnormal{ odd}\}$-factor.
		Note that $G$ is a Sterboul--Deming graph.
		This is an example of \cref{thm410ll}.
	}
	\label{desc-cicos-impares}
	
\end{figure}
\begin{lemma}
	Let $G$ be a König--Egerváry graph, and let $S$ be a
	Sachs subgraph of $G$. Then $S$ contains no odd cycles.
\end{lemma}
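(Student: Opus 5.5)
We argue by contradiction, using the counting characterization $\alpha(G)+\mu(G)=|V(G)|$ of König--Egerváry graphs together with the fact that a König--Egerváry graph admits a maximum independent set $I$ and a maximum matching $M$ such that every edge of $M$ joins $I$ to $V(G)-I$. This fact is immediate: $M$ covers $2\mu(G)$ vertices, and each edge of $M$ has at most one endpoint in $I$. Hence
\[
|I|\le (|V(G)|-2\mu(G))+\mu(G)=|V(G)|-\mu(G)=\alpha(G)=|I|.
\]
Equality therefore holds throughout. This means every vertex not saturated by $M$ lies in $I$, and every edge of $M$ has exactly one endpoint in $I$. In particular, every maximum matching $M$ saturates $V(G)-I$ and matches it into $I$, so $|V(G)-I|=\mu(G)$.

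Now let $S$ be a Sachs subgraph of $G$, and suppose some component of $S$ is an odd cycle $C$ of length $2r+1$. Because $S$ is spanning and its components are $K_2$'s and cycles, we count, component by component, how many vertices of each component lie in $V(G)-I$. Since $I$ is independent, every component $K$ of $S$ satisfies $|V(K)\cap I|\le\lfloor |V(K)|/2\rfloor$. For a $K_2$ or an even cycle this means at least half the vertices lie outside $I$. For the odd cycle $C$, at least $r+1>|V(C)|/2$ vertices lie outside $I$. Summing over components gives $|V(G)-I|>|V(G)|/2$.

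On the other hand, $|V(G)-I|=\mu(G)\le |V(G)|/2$, which is a contradiction. Hence $S$ contains no odd cycle.

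Once the equality $|V(G)-I|=\mu(G)$ is available, the only point requiring care is the strict inequality contributed by the odd component, and it holds trivially. The main step is therefore the KE counting identity; no earlier results of the paper beyond the definition of König--Egerváry graphs are needed. We also expect this lemma to be combined later with the fact that $G[KE(G)]$ is König--Egerváry to obtain \cref{thm410ll}.
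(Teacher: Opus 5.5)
Your proof is correct and is essentially the paper's argument. The paper bounds a minimum vertex cover, $\tau(G)>|V(G)|/2\ge\mu(G)$, by counting its intersection with each component of $S$. You run the same component-wise count on $V(G)-I$, which is exactly such a minimum cover. Your preliminary paragraph about $M$ matching $V(G)-I$ into $I$ is unnecessary, since $|V(G)-I|=|V(G)|-\alpha(G)=\mu(G)$ follows directly from the definition of a König--Egerváry graph.
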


\begin{proof}
	Suppose that $G$ is a König--Egerváry graph, and
	$S$ is a Sachs subgraph of $G$ containing an odd cycle. 
	If $H$ is an even component of $S$, then we need half of the vertices 
	of $H$ to cover its edges $E(H)$. If $H$ is an odd component, 
	more than half of its vertices are needed. 
	Therefore, since $S$ contains an odd cycle, we obtain 
	$\tau(G)>\frac{|G|}{2}$, and hence 
	$\tau(G)>\frac{|G|}{2}\ge\mu(G)$, which is a contradiction.
\end{proof}

\begin{lemma}[\label{123qwe2}\cite{kevindetgeneralsdke}]
	If $S\in\textnormal{Sachs}(G)$, then $E(S)\cap\partial(SD(G))=\emptyset$. 
\end{lemma}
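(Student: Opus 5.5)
The approach is a weight-counting argument on $KE(G)$. We read $S$ as a fractional perfect matching and show that $KE(G)$ contains an independent set that absorbs all the weight its neighbours can carry. Put $K:=KE(G)$. Define the weight function $w$ on $E(G)$ by $w(e)=1$ if $e$ is a $K_2$-component of $S$, $w(e)=\tfrac12$ if $e$ lies on a cycle component of $S$, and $w(e)=0$ otherwise. Since $S$ is spanning, $\sum_{e\ni v}w(e)=1$ for every $v\in V(G)$. Moreover, an edge lies in $E(S)$ exactly when $w(e)>0$. So it suffices to show that $w(e)=0$ for every $e\in\partial(K)$.

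\textbf{Step 1.} We use the preceding theorem that $G[K]$ is a König--Egerváry graph. Take a maximum independent set $I$ of $G[K]$ with $|I|=|K|-\mu(G[K])$. Then $C:=K\setminus I$ is a vertex cover of $G[K]$ with $|C|=\mu(G[K])\le |K|/2$, so $|C|\le|I|$.

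\textbf{Step 2 (key step).} We need an $I$ as in Step~1 with $N_G(I)\subseteq K$, i.e.\ no vertex of $I$ has a neighbour in $SD(G)$. This is the analogue of item~4 of \cref{larsonthm}: $I$ should behave like a critical independent set. I would argue by contradiction. Suppose $y\in I$ is adjacent to $u\in SD(G)$. Take a maximum matching $M$ and an $M$-Jposy or $M$-Jflower through $u$, which exists by \cref{12ews}. Then route the edge $yu$ (possibly after rotating $M$ along an alternating path inside $K$, using that $C$ is matched into $I$) so that $y$ lies in an $M'$-Jposy or $M'$-Jflower. That gives $y\in SD(G)$, a contradiction. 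If this direct exchange gets messy, I would instead choose $I$ among all maximum independent sets of $G[K]$ to minimize the number of edges from $I$ to $SD(G)$, and use König--Egerváry exchanges to reduce this number to zero. I expect this step to be the main obstacle, because it is where the flexibility of Jposies and Jflowers has to be used.

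\textbf{Step 3 (counting).} Every edge incident to $I$ has its other endpoint in $N_G(I)\subseteq C$. Therefore
\[
|I|=\sum_{v\in I}\sum_{e\ni v}w(e)\le\sum_{v\in C}\sum_{e\ni v}w(e)=|C|\le |I|.
\]
So equality holds throughout. In particular, for each $v\in C$, all the $w$-weight at $v$ goes to edges joining $v$ to $I$. Hence every edge from $C$ to $V(G)\setminus I$, and in particular to $SD(G)$, has weight $0$. Edges from $I$ to $SD(G)$ do not exist by Step~2. Thus $w$ vanishes on $\partial(K)=\partial(SD(G))$, and so $E(S)\cap\partial(SD(G))=\emptyset$.
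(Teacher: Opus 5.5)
\paragraph{Review of the proposal.} The paper gives no proof of this lemma; it is imported from \cite{kevindetgeneralsdke}, so your argument has to stand on its own.

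\paragraph{Where the argument fails.} Steps~1 and~3 are fine. Reading $S$ as a fractional perfect matching and double counting correctly reduces the lemma to Step~2. But Step~2 carries essentially all the content of the lemma, and your argument for it does not work.

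Your contradiction argument uses nothing about $I$ except that it is a maximum independent set of the König--Egerváry graph $G[K]$, for which ``$C$ is matched into $I$'' holds automatically. For such an $I$ the conclusion is false. Take two triangles $a_1b_1c_1$ and $a_2b_2c_2$ joined by the edge $a_1a_2$, and attach a path $b_1pq$ with $q$ a leaf. This graph has a unique perfect matching, and Algorithm~1 with \cref{Thm1} gives $KE(G)=\{p,q\}$, with $SD(G)$ the six triangle vertices. Now $I=\{p\}$ is a maximum independent set of $G[K]\cong K_2$, and $C=\{q\}$ is matched into it. Also $p$ is adjacent to $b_1\in SD(G)$, yet $p\notin SD(G)$.

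So no rerouting can place an arbitrary vertex of $I$ into a Jposy or Jflower. Step~2 is really an existence statement about a specially chosen $I$ (here $\{q\}$). What Step~3 actually needs is an independent set $I$ with $KE(G)=I\cup N_G(I)$ and $|N_G(I)|\le|I|$, that is, a Larson-type description of $KE(G)$. Your fallback, minimizing the number of $I$--$SD(G)$ edges, presupposes that this minimum is $0$ and supplies no exchange step. It restates the claim rather than proving it.

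\paragraph{What can be rescued.} Part of this follows from results quoted in the paper. If $G$ has a perfect matching, then \cref{agusvale} and item~4 of \cref{larsonthm} give $KE(G)=L(G)=J\cup N(J)$ for a maximum critical independent set $J$. Comparing $J$ with $\emptyset$ in the definition of criticality gives $|N(J)|\le|J|$. Taking $I=J$ and $C=N(J)$ in your Step~3 then proves the lemma for every graph with a perfect matching, even when $S$ has odd cycles.

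However, a graph with a Sachs subgraph need not have a perfect matching, and then \cref{agusvale} is unavailable; in fact $L(G)$ and $KE(G)$ can differ. Take a triangle $abc$ with a pendant path $axy$. The triangle together with $xy$ is a Sachs subgraph. The maximum matching $\{ax,bc\}$ yields a Tflower rooted at $y$ that contains every vertex, so $KE(G)=\emptyset$. But $\{y\}$ is a maximum critical independent set, so $L(G)=\{x,y\}$.

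For graphs with a Sachs subgraph but no perfect matching, you therefore need a genuinely new argument that $KE(G)$ has the form $I\cup N_G(I)$ with $|N_G(I)|\le|I|$. That argument is the missing idea, and it is where the real difficulty of the lemma lies.
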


\begin{theorem}\label{thm410ll}
	Let $S$ be a Sachs subgraph of $G$, and let $C$ be an
	odd cycle of $G$. Then $V(C)\subseteq SD(G)$. 
\end{theorem}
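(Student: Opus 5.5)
Here we read the statement as saying that $C$ is an odd cycle that is a component of the Sachs subgraph $S$; this is the situation of the $\{C_n : n \textnormal{ odd}\}$-factor application. The plan is a short argument by contradiction. It combines three earlier results: \cref{123qwe2}, the theorem that $G[KE(G)]$ is a König--Egerváry graph, and the lemma that a Sachs subgraph of a König--Egerváry graph contains no odd cycle.

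\textbf{Step 1: each component of $S$ lies on one side of the partition.} By \cref{123qwe2}, $E(S)\cap\partial(SD(G))=\emptyset$. So no edge of $S$ joins a vertex of $SD(G)$ to a vertex of $KE(G)$. Each component of $S$ is connected, being either a $K_2$ or a cycle. Hence each component has its vertex set entirely contained in $SD(G)$ or entirely contained in $KE(G)$. In particular this holds for $C$, so either $V(C)\subseteq SD(G)$, which is what we want, or $V(C)\subseteq KE(G)$.

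\textbf{Step 2: restrict $S$ to the KE side.} Suppose, for contradiction, that $V(C)\subseteq KE(G)$; then $KE(G)\neq\emptyset$. Let $S'$ be the union of the components of $S$ whose vertices lie in $KE(G)$. By Step 1 these components cover all of $KE(G)$, because $S$ is spanning and no component straddles the partition. Each of them is a $K_2$ or a cycle, with all its edges inside $G[KE(G)]$. Therefore $S'$ is a Sachs subgraph of $G[KE(G)]$, and it contains the odd cycle $C$.

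\textbf{Step 3: contradiction.} By the theorem cited from \cite{jaume2025confpart3}, $G[KE(G)]$ is a König--Egerváry graph. The lemma preceding \cref{123qwe2} says a Sachs subgraph of a König--Egerváry graph contains no odd cycle. This contradicts Step 2, so $V(C)\subseteq SD(G)$.

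The only delicate point is Step 2: checking that restricting $S$ really gives a \emph{spanning} subgraph of $G[KE(G)]$, so that the Sachs-subgraph lemma applies. This is exactly where \cref{123qwe2} is essential. Without it, a component of $S$ could straddle the two sides, and the restriction would leave some vertices of $KE(G)$ uncovered by $K_2$'s or cycles.
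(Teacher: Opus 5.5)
Your proposal is correct and follows the paper's proof step for step. The cited lemma that $E(S)\cap\partial(SD(G))=\emptyset$ puts each component of $S$ entirely inside $SD(G)$ or inside $KE(G)$, so $S[KE(G)]$ is a Sachs subgraph of the K\"onig--Egerv\'ary graph $G[KE(G)]$ and has no odd cycle; the paper argues directly where you argue by contradiction, which is the same argument. You are also right to read $C$ as an odd cycle of $S$ rather than of $G$: the paper's argument implicitly uses this reading, and the literal version fails (a triangle with a pendant vertex at each corner is K\"onig--Egerv\'ary, yet it has a Sachs subgraph and an odd cycle).
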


\begin{proof}
	By \cref{123qwe2}, $E(S)\cap\partial(SD(G))=\emptyset$.
	Thus, for every component $H$ of $S$, we have either 
	$V(H)\subseteq KE(G)$ or $V(H)\subseteq SD(G)$.
	Therefore, $S^{\P}:=S[KE(G)]$ is a Sachs subgraph
	of $G^{\P}:=G[KE(G)]$. 
	Since $G^{\P}$ is a König--Egerváry graph, $S^{\P}$
	has no odd cycles, and consequently $V(C)\subseteq SD(G)$.
\end{proof}

\begin{corollary}\label{corolarioasdarriba2}
	If every vertex of a graph $G$ lies in an odd cycle
	of a Sachs subgraph of $G$, then $G$ is a Sterboul--Deming graph.
\end{corollary}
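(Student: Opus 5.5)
The corollary follows directly from \cref{thm410ll}, read with the intended meaning that $C$ is an odd cycle \emph{of the Sachs subgraph} $S$ (that is, a component of $S$ that is an odd cycle). I would first fix an arbitrary vertex $v\in V(G)$ and use the hypothesis to pick a Sachs subgraph $S_v\in\Sachs{G}$ and a component $C_v$ of $S_v$ that is an odd cycle containing $v$. Applying \cref{thm410ll} to the pair $(S_v,C_v)$ gives $V(C_v)\subseteq SD(G)$, so $v\in SD(G)$. Since $v$ was arbitrary, $SD(G)=V(G)$, that is, $KE(G)=\emptyset$, and $G$ is a Sterboul--Deming graph by definition.

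The only point requiring care is that the Sachs subgraph may depend on $v$. This is harmless, because $SD(G)$ and $KE(G)$ are intrinsic to $G$ and do not depend on any particular matching or Sachs subgraph. \cref{thm410ll} is therefore applied separately for each $v$ and the conclusions are combined.

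For completeness, I would briefly recall why \cref{thm410ll} holds in this reading. By \cref{123qwe2}, no edge of $S_v$ crosses $\partial(SD(G))$, so each component of $S_v$ lies entirely in $SD(G)$ or entirely in $KE(G)$. If $C_v$ were inside $KE(G)$, then the restriction of $S_v$ to $KE(G)$ would be a Sachs subgraph of $G[KE(G)]$ containing an odd cycle. But $G[KE(G)]$ is König--Egerváry, and the preceding lemma says a Sachs subgraph of a König--Egerváry graph has no odd cycles, a contradiction. There is no real obstacle here; the only subtlety is making sure the cycle is a component of the Sachs subgraph, which is exactly what the hypothesis of the corollary provides. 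As a consequence, every graph with a $\{C_n:n \textnormal{ odd}\}$-factor satisfies the hypothesis with a single Sachs subgraph, namely the factor itself.
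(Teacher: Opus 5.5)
Your proof is correct and follows the paper's route: the corollary is an immediate consequence of \cref{thm410ll}, applied vertex by vertex to an odd-cycle component of a Sachs subgraph, and your recap of that theorem via \cref{123qwe2} and the lemma on Sachs subgraphs of K\"onig--Egerv\'ary graphs matches the paper's argument. You are also right to read $C$ in \cref{thm410ll} as a cycle component of $S$; the literal ``odd cycle of $G$'' is a misstatement, and it fails, for instance, for a triangle with a pendant vertex attached at each of its vertices, which is K\"onig--Egerv\'ary.
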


In the graph shown in \cref{FiguraWIKI}, it is easy to see that every vertex
lies in an odd cycle of a Sachs subgraph of $G$; hence $G$ is a
Sterboul--Deming graph.

\begin{figure}[h!]
	
	\begin{center}

		\tikzset{every picture/.style={line width=0.75pt}} 
		


	\end{center}
	\caption{Illustration of \cref{corolarioasdarriba2}}
	\label{FiguraWIKI}
	
\end{figure}

On the other hand, since a $\{C_{n}:n=3,5,\dots\}$-factor of a graph
$G$ is a Sachs subgraph of $G$, we have the following.

\begin{corollary}
	Every graph with a $\{C_{n}:n=3,5,\dots\}$-factor
	is a Sterboul--Deming graph.
\end{corollary}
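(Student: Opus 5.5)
The statement follows directly from \cref{thm410ll}, with no further structural analysis needed. Let $F$ be a $\{C_{n}:n=3,5,\dots\}$-factor of $G$, that is, a $2$-factor all of whose cycles are odd. The plan has three steps: check that $F$ is a Sachs subgraph of $G$, apply \cref{thm410ll} cycle by cycle, and conclude that $KE(G)=\emptyset$.

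First I check that $F$ is a Sachs subgraph. It is spanning, because a $2$-factor covers every vertex. It is $2$-regular, so each of its components is a cycle, and a cycle is one of the allowed component types (the other being $K_2$). Hence $F\in\Sachs{G}$.

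Next I apply \cref{thm410ll} with $S:=F$ to each cycle $C$ of $F$. In the proof of that theorem, the cycle $C$ is a component of the Sachs subgraph $S$. This is exactly the situation here, since every cycle of $F$ is an odd component of $F$. By \cref{123qwe2}, every component of $F$ lies entirely in $SD(G)$ or entirely in $KE(G)$. The restriction $F[KE(G)]$ is then a Sachs subgraph of the König--Egerváry graph $G[KE(G)]$. By the preceding lemma, it contains no odd cycle. Because every component of $F$ is an odd cycle, no component of $F$ can lie in $KE(G)$. So $V(C)\subseteq SD(G)$ for every cycle $C$ of $F$.

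Finally, since the cycles of $F$ partition $V(G)$, we get $SD(G)=V(G)$, so $KE(G)=\emptyset$ and $G$ is a Sterboul--Deming graph. Equivalently, every vertex lies in an odd cycle of the Sachs subgraph $F$, so \cref{corolarioasdarriba2} applies verbatim.

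The only delicate point is the hypothesis of \cref{thm410ll}. It must be applied to odd cycles that are components of the Sachs subgraph, not to arbitrary odd cycles of $G$. Here this holds automatically, because $F$ consists solely of odd cycles.
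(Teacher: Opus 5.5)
Your argument is correct and is essentially the paper's: the paper notes that a $\{C_{n}:n=3,5,\dots\}$-factor is a Sachs subgraph in which every vertex lies on an odd cycle, and then applies \cref{corolarioasdarriba2}. That corollary is \cref{thm410ll}, proved via \cref{123qwe2} and the fact that $G[KE(G)]$ is K\"onig--Egerv\'ary, which is exactly how you unwound it. Your caveat is also right: \cref{thm410ll} must be read with $C$ an odd cycle of $S$, since read literally for arbitrary odd cycles of $G$ it fails (e.g.\ a triangle with a pendant vertex at each corner is K\"onig--Egerv\'ary, so $SD(G)=\emptyset$), and here the corrected hypothesis holds automatically because every component of the factor is an odd cycle.
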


As an immediate corollary, every Hamiltonian graph of odd order is
a Sterboul--Deming graph. A complete graph of order $4$ is a Sterboul--Deming
graph, and therefore, by \cref{thm410ll}, every complete graph of order
at least $3$ is a Sterboul--Deming graph. The Petersen graph is also
a Sterboul--Deming graph. In 1970, Hajnal and Szemerédi \cite{hajnal1970proof}
proved the following.

\begin{theorem}[\cite{hajnal1970proof}\label{erdosconj}]
	Every graph $G$ with $\left|G\right|=n=ks$ and 
	$\delta(G)\ge\left(\frac{s-1}{s}\right)n$
	has a $\{K_{s}\}$-factor. 
\end{theorem}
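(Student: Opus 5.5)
The plan is to pass to the complement and prove the equivalent equitable-colouring statement. Let $H=\overline{G}$. A copy of $K_s$ in $G$ is an independent $s$-set in $H$, so a $\{K_s\}$-factor of $G$ is exactly a partition of $V(H)$ into $k$ independent sets of size $s$, i.e.\ an \emph{equitable} proper $k$-colouring of $H$. The degree hypothesis translates as
\[
\Delta(H)\le n-1-\delta(G)\le n-1-(s-1)k=k-1 .
\]
So it suffices to show that every graph $H$ on $n=ks$ vertices with $\Delta(H)\le k-1$ has an equitable $k$-colouring. I would follow the Kierstead--Kostochka short proof, using induction on $|E(H)|$.

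\textbf{Base case and induction step.} If $H$ is edgeless, any partition into $k$ blocks of size $s$ works. Otherwise pick an edge $uv$ and apply induction to $H-uv$, which has the same maximum-degree bound. If $u$ and $v$ receive different colours we are done. If they share a class, then $u$ has at most $k-1$ neighbours in $H$, so some class other than its own contains none of them. Moving $u$ there yields a proper colouring that is \emph{nearly equitable}: exactly one class $V^-$ of size $s-1$, one class $V^+$ of size $s+1$, and all other classes of size $s$.

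\textbf{Repairing a nearly equitable colouring.} Define an auxiliary digraph on the colour classes with $W\to X$ whenever some vertex $w\in W$ has no neighbour in $X$, so that $w$ can be moved from $W$ to $X$. Let $\mathcal A$ be the set of classes that can reach $V^-$, and let $\mathcal B$ be the remaining classes.
\begin{itemize}
\item[(i)] If $V^+\in\mathcal A$, shift one vertex along each arc of a directed path from $V^+$ to $V^-$. Every intermediate class loses one vertex and gains one, so the colouring becomes equitable.
\item[(ii)] Otherwise $V^+\in\mathcal B$. Put $a=|\mathcal A|$ and $b=|\mathcal B|$. By definition of $\mathcal A$, every vertex $y$ of a $\mathcal B$-class has a neighbour in each $\mathcal A$-class. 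Comparing this with $\Delta\le k-1$ gives $\deg_{\mathcal B}(y)\le b-1$. Since $H[\bigcup\mathcal B]$ has $bs+1$ vertices, this suggests using induction on the number of classes for the $\mathcal B$-part, or more precisely a secondary induction on $a$.
\end{itemize}

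\textbf{Main obstacle.} The crux is case (ii) when no direct shift exists. Following Kierstead--Kostochka, I would call a class $X\in\mathcal A$ \emph{terminal} if every class of $\mathcal A\setminus\{X\}$ can still reach $V^-$ without passing through $X$. Within a terminal class, call $x$ \emph{solo} if it has exactly one neighbour in some class. A double counting of the edges between $\mathcal A$ and $\mathcal B$, together with $\Delta\le k-1$, should produce a vertex $y\in\mathcal B$ with at least two solo neighbours in a terminal class, or else a reducible configuration. In that situation one moves $y$ into the terminal class, evicts its solo neighbour there, and relocates the evicted vertex, which increases $|\mathcal A|$ or lets the secondary induction apply. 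Getting this counting to close, in particular verifying that the vertices moved do not create new conflicts, is the step I expect to be the hardest. I would check it carefully against the inequality $a(s-1)+\ldots\le$ the number of edges leaving $\mathcal B$.
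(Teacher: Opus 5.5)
The paper gives no proof of this statement; it only cites Hajnal--Szemer\'edi. Your route is legitimate: pass to $H=\overline{G}$, look for an equitable $k$-colouring, and follow Kierstead--Kostochka. Everything up to case (ii) is correct:
\begin{itemize}
\item[(a)] $\Delta(H)\le k-1$;
\item[(b)] the edge induction yields a nearly equitable colouring;
\item[(c)] case (i) is a path shift;
\item[(d)] in case (ii) every $y\in B=\bigcup\mathcal{B}$ has $d_B(y)\le b-1$.
\end{itemize}
But case (ii) is the entire content of the theorem, and you do not prove it. What must be shown is that some exchange of the following kind always exists. One needs a terminal class $W$, a vertex $y\in B$ with $N_W(y)=\{w\}$, and a class $X\in\mathcal{A}\setminus\{W\}$ in which $w$ has no neighbour. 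Then move $y$ into $W$ and $w$ into $X$, and shift along an $X$--$V^-$ path of $\mathcal{H}-W$. Finally recolour $H[B\setminus\{y\}]$ by induction; it has $bs$ vertices and maximum degree at most $b-1$. Your sketch asserts that a double count ``should'' produce such a move, but it does not.

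As written the sketch is also not coherent. ``Solo'' must mean that $w$ is the \emph{unique} neighbour in $W$ of some $y\in B$; you have the roles reversed. With the correct meaning, $y$ can never have two solo neighbours in the same class. With your meaning, moving $y$ into $W$ and evicting one neighbour leaves $y$ adjacent to the other.

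The natural count also does not close on its own. Suppose no exchange exists, and let $S_w=\{y\in B: N_W(y)=\{w\}\}$. Each solo $w\in W$ then has a neighbour in every other class of $\mathcal{A}$, so $|S_w|\le d_B(w)\le b$. Every $y\in B$ has at least one neighbour in $W$, and at least two unless $y$ lies in some $S_w$. Counting edges between $W$ and $B$ gives
\[
2(bs+1)\le 2b\,|W_{\mathrm{solo}}|+(a+b-1)\,|W\setminus W_{\mathrm{solo}}| .
\]
This is a contradiction when $a\le b+1$, but not in general when $a\ge b+2$. In that regime, vertices of $W$ that are nobody's unique neighbour can each have up to $a+b-1>2b$ neighbours in $B$, and nothing in your outline controls them. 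That regime needs a further argument, a different exchange or a different count, which the proposal does not supply. So the proof is incomplete exactly at its decisive step.
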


From the result of Hajnal and Szemerédi and from \cref{thm410ll}, we obtain the following.

\begin{corollary}
	Let $s>2$. Then every graph $G$ with $\left|G\right|=n=ks$ and 
	$\delta(G)\ge\left(\frac{s-1}{s}\right)n$ 
	is a Sterboul--Deming graph. 
\end{corollary}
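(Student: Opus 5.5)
The plan is to reduce the corollary to \cref{thm410ll} by exhibiting, for each vertex of $G$, a Sachs subgraph in which that vertex lies on an odd cycle. Here $s>2$, $n=ks$ and $\delta(G)\ge\left(\frac{s-1}{s}\right)n$.

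First, I apply \cref{erdosconj} to obtain a $\{K_s\}$-factor of $G$, that is, a partition of $V(G)$ into $k$ vertex-disjoint copies $Q_1,\dots,Q_k$ of $K_s$. It then suffices to show that every vertex $v$ lies in an odd cycle $C$ belonging to some Sachs subgraph of $G$; by \cref{thm410ll} this gives $V(C)\subseteq SD(G)$, and hence $KE(G)=\emptyset$. Equivalently, I verify the hypothesis of \cref{corolarioasdarriba2}, applied with a Sachs subgraph that may depend on $v$.

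Second, I build a Sachs subgraph of each clique $Q_i$ separately, choosing it according to the parity of $s$. If $s$ is odd, a Hamiltonian cycle of $Q_i\cong K_s$ is an odd cycle through all of its vertices. If $s\ge 4$ is even, I take a triangle through any prescribed vertex $v$ of $Q_i$ and cover the remaining $s-3$ vertices (an odd number, at least $1$) by a cycle of length $s-3$ when $s-3\ge 3$. When $s=4$ only one vertex remains, so I instead take a triangle together with a single vertex and pair things differently: use a $C_3$ on $v,a,b$, and for the leftover vertex $c$ use the whole $K_4$. Since $K_4$ has even order, a spanning cycle of it is even, which does not help; the remedy in that case is that $K_4$ is already known to be Sterboul--Deming (as the paper notes).

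The simplest uniform route, then, is to avoid case analysis as follows. For even $s$, if $s\ge 6$, split $Q_i$ into $C_3$ and $C_{s-3}$, both odd cycles with $v$ on the triangle. For $s=4$, take any $C_3$ through $v$ in $Q_i$, and match the fourth vertex of $Q_i$ with a vertex outside $Q_i$. This is fragile, so as the fallback for $s=4$ I would rather take the union over $i$ of the odd-cycle Sachs pieces where possible. Alternatively, and more cleanly, I note that $K_s$ itself satisfies $KE(K_s)=\emptyset$ for every $s\ge 3$, as stated before \cref{erdosconj}. Combining this with \cref{123qwe2}, since the $Q_i$ yield a perfect-matching/Sachs structure whose edges cannot cross $\partial(SD(G))$, I would then argue that $SD$ is monotone under adding edges to the spanning subgraph $\bigcup Q_i$.

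Third, I take the union of the chosen pieces over all $i$. This gives a spanning subgraph of $G$ whose components are cycles (and possibly $K_2$'s), that is, a Sachs subgraph containing an odd cycle through $v$. Applying \cref{thm410ll} then yields $v\in SD(G)$.

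I expect the case $s=4$ (and more generally handling vertices when $s$ is even) to be the main obstacle, because $K_4$ cannot be partitioned into odd cycles alone. There I would first try to pair the leftover vertex of $Q_i$ with a leftover vertex of another $Q_j$, using the high minimum degree $\delta(G)\ge 3n/4$. If $k$ forces an unpaired vertex, I would use an extra triangle across two cliques, which exists by the same degree bound.
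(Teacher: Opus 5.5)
Your odd-$s$ case is exactly the paper's argument, and splitting each $Q_i$ into a triangle through $v$ plus a $C_{s-3}$ is a correct alternative for even $s\ge 6$. The genuine gap is $s=4$, which you never close.

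There the Sachs-subgraph route cannot work in general. For $k=1$ the graph is $G=K_4$, and no Sachs subgraph of $K_4$ contains an odd cycle: a triangle leaves a single vertex that no $K_2$ or cycle can cover. So the hypothesis of \cref{corolarioasdarriba2} fails outright.

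Your fallback, that $SD$ is monotone under adding edges, is false as a general principle. A triangle $abc$ together with an isolated vertex $d$ has $\{a,b,c\}\subseteq SD$, via a flower with trivial stem. Yet adding the edge $cd$ produces a K\"onig--Egerv\'ary graph ($\alpha=\mu=2$, $n=4$), whose $SD$ is empty. \Cref{123qwe2} does not help either: it only says that each component of a Sachs subgraph lies entirely in $SD(G)$ or entirely in $KE(G)$. It says nothing about transferring $SD$-membership from a spanning subgraph to $G$. So knowing that $K_4$ is Sterboul--Deming as an abstract graph is not enough by itself.

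The missing idea is to use the matching. This is what the paper's even case rests on: it appeals to posies via \cref{safe}, not to \cref{thm410ll}. For even $s\ge 4$, the $K_s$-factor gives a perfect matching $M$ of $G$. Choose it so that it restricts to a perfect matching of a $K_4\subseteq Q_i$ containing $v$. That $K_4$ with its inherited matching is an $M$-Tposy by definition, and $M$ is maximum in $G$ because it is perfect, so $v\in SD(G)$. This is exactly why your monotonicity claim does hold in this special situation, and it handles all even $s$ at once.

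If you want to keep the Sachs approach, $s=4$ with $k\ge 2$ is easily repaired without any global pairing. Take $\ell\in Q_i\setminus\{v\}$. Since $\deg(\ell)\ge 3k>3$, $\ell$ has a neighbor $u$ in some $Q_j$ with $j\neq i$. The triangles $Q_i-\ell$ and $Q_j-u$, the edge $\ell u$, and perfect matchings of the remaining cliques form a Sachs subgraph with an odd cycle through $v$. Only two cliques need odd cycles, so no parity issue in $k$ arises. The case $k=1$ still requires the posy argument.
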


\begin{proof}
	If $s$ is odd, the result follows from \cref{thm410ll} and \cref{erdosconj}. 
	If $s$ is even, the result follows directly from \cref{safe} and \cref{erdosconj}.
\end{proof}

\section*{Open problems}

\begin{problem}
	Find a characterization of the type of \cref{agustinachat} for Sterboul--Deming
	graphs without a perfect matching.
\end{problem}

\begin{problem}
	Can the SD--KE decomposition be determined solely from the $\{C_{n}\}$-factors
	of the graph?
\end{problem}

\section*{Acknowledgments}

This work was partially supported by Universidad Nacional de San Luis (Argentina), PROICO 03-0723, MATH AmSud, grant 22-MATH-02, Agencia I+D+i (Argentina), grants PICT-2020-Serie A-00549 and PICT-2021-CAT-II-00105, CONICET (Argentina) grant PIP 11220220100068CO.

\section*{Declaration of generative AI and AI-assisted technologies in the writing process}
During the preparation of this work the authors used ChatGPT-3.5 in order to improve the grammar of several paragraphs of the text. After using this service, the authors reviewed and edited the content as needed and take full responsibility for the content of the publication.

\section*{Data availability}

Data sharing not applicable to this article as no datasets were generated or analyzed during the current study.

\section*{Declarations}

\noindent\textbf{Conflict of interest} \ The authors declare that they have no conflict of interest.

\bibliographystyle{apalike}

\bibliography{TAGcitasV2025}

\end{document}